\definecolor{ufogreen}{rgb}{0.24, 0.80, 0.44}
\definecolor{uablue}{rgb}{0.0, 0.2, 0.67}
\definecolor{purpleheart}{rgb}{0.41, 0.21, 0.61}
\definecolor{pinegreen}{rgb}{0.0, 0.47, 0.44}
\newcommand{\comp}{{\mathbb{C}}}
\newcommand{\ds}{\displaystyle}
\newtheorem{corollary}{Corollary}
\newtheorem{lemma}{Lemma}
\newtheorem{definition}{Definition}
\newtheorem{proposition}{Proposition}
\newtheorem{remark}{Remark}
\DeclareMathOperator{\SO}{SO}
\DeclareMathOperator{\GL}{GL}
\DeclareMathOperator{\spn}{span}
\DeclareMathOperator{\pr}{pr}
\DeclareMathOperator{\SE}{SE}
\DeclareMathOperator{\so}{\mathfrak{so}}
\DeclareMathOperator{\tr}{tr}
\newcommand{\be}{\begin{equation}}
\newcommand{\ee}{\end{equation}}
\newcommand{\bean}{\begin{equation*}}
\newcommand{\ean}{\end{equation*}}
\newcommand{\ba}{\begin{array}}
\newcommand{\ea}{\end{array}}
\newcommand{\bpm}{\begin{pmatrix}}
\newcommand{\epm}{\end{pmatrix}}
\newcommand{\Ad}{\text{\rm Ad}}
\newcommand{\St}{\text{\rm St}}
\newcommand{\Id}{\text{\rm Id}}
\newcommand{\SL}{\text{\rm SL}}
\newcommand{\SU}{\text{\rm SU}}
\newcommand{\diag}{\text{\rm diag}}
\newcommand{\fg}{\mathfrak {g}}
\newcommand{\fh}{\mathfrak {h}}
\newcommand{\fp}{\mathfrak {p}}
\newcommand{\fso}{\mathfrak{so}}
\newcommand{\fsu}{\mathfrak{su}}
\newcommand{\fgl}{\mathfrak{gl}}
\newcommand{\fsl}{\mathfrak{sl}}
\newcommand\cD{\mathcal{D}}
\newcommand\cH{\mathcal{H}}
\newcommand\cP{\mathcal{P}}
\newcommand\cM{\mathcal{M}}
\newcommand\IC{{\mathbb C}}
\newcommand\IR{{\mathbb R}}
\newcommand{\M}{\widehat{M}}
\newcommand{\R}{\mathbb{R}}
\begin{document}

\title{Symmetric spaces rolling  on flat spaces}
\author{V. Jurdjevic}
\address{V. Jurdjevic
\newline \hspace*{0,8 cm}Department of Mathematics, University of Toronto, 
 \newline \hspace*{0,8 cm}Toronto, Ontario  M5S 3G3, Canada}
\email{jurdj@math.utoronto.ca}

\author{I. Markina}
\address{I. Markina
\newline \hspace*{0,8 cm}Department of Mathematics, University of Bergen, 
 \newline \hspace*{0,8 cm}P.O.~Box 7803, Bergen N-5020, Norway}
\email{irina.markina@uib.no}


\author{F. Silva Leite}
\address{F. Silva Leite
\newline \hspace*{0,8 cm}Institute of Systems and Robotics, Uni\-versity of Coimbra,
 \newline \hspace*{0,8 cm}P\'olo II, 3030-290 Coimbra, Portugal 
 \newline \hspace*{0,8 cm} and
 \newline \hspace*{0,8 cm}Department of Mathematics, University of Coimbra,
 \newline \hspace*{0,8 cm}Apartado 3008, 3001-501 Coimbra, Portugal}
\email{fleite@mat.uc.pt}
\date {}

\begin{abstract}

The objective of the current paper is essentially two\-fold.  Firstly, to make clear the difference between two notions of rolling a Riemannian manifold over another, using a language accessible to a wider audience, in particular to readers with interest in applications. Secondly, we concentrate on rolling an important class of Riemannian manifolds. In the first part of the paper, the relation between intrinsic and extrinsic rollings is explained in detail, while in the second part we address rollings of symmetric spaces on flat spaces and complement the theoretical results with illustrative examples.
\end{abstract}
\maketitle

\vspace*{0.25 cm}
\noindent \emph{Keywords:} Semi-Riemannian manifolds, symmetric spaces, group actions, Car\-tan decomposition, intrinsic rolling, extrinsic rolling, no-slip, no-twist,  Stiefel manifolds.\vspace*{0.25 cm}

\noindent \emph{2020 Mathematics Subject Classification.} Primary: 53C35, 53A35, 53C50, 53B21, 53C25; Secondary: 53A17, 53C17.


\section{ Introduction}

 In the contemporary literature, there exist two notions of rolling a Riemannian manifold over another, which more recently have also been extended to the semi-Riemannian case.  One of these notions is \emph{intrinsic rolling}, that  doesn't require that the Riemannian manifolds are embedded. This concept uses the intrinsic geometry of the manifolds only, and for  Riemannian surfaces was introduced by Agrachev and Sachkov in \cite{Ag} and by  Bryant and Hsu in \cite{BH}, and later studied for manifolds of higher dimensions, for instance, in~\cite {CMK}, \cite{ChK} and \cite{MOL}. Extension to the semi-Riemannian situation appeared in \cite{MarLeite}.

Another definition of rolling initiated by K. Nomizu in~\cite{N1} and presented more formally by R.W. Sharpe in~\cite{Sh} is the \emph{extrinsic rolling}, which makes use of the isometric embedding of the manifolds in an ambient (semi)-Euclidean vector space $V$, so that the rolling is des\-cribed in terms of the action of the group $\SE(V)$ of oriented isometries of $V$. More recent works that use the extrinsic rolling are, for instance,~\cite{Zimm, HL, JZ, HKSL, CL2012, LLouro, KML, ML2022}. As far as we know, only in \cite{MOL} and \cite{MarLeite} both notions of rolling were addressed, the first for the Riemannian case and the second for the semi-Riemannian case. These two works are rather theoretical for researchers interested in applications of rolling motions but that do not have a strong background in differential geometry.

The purpose of the current paper is essentially twofold.  Firstly, we  want to elucidate the difference between the two notions of rolling using a language that is more accessible to those with practical interest in rolling motions but less familiar with semi-Riemannian geometry. Secondly,  we make transparent the relation between the geometry of the symmetric spaces and its rolling on flat spaces. Our main message is that the transitive action $\tau$ of a Lie group on a symmetric space completely defines its rolling along a chosen curve on the manifold. The differential map $d\tau$ is the isometry between the tangent spaces (after an identification of related vector spaces), that also matches the parallel vector fields on the rolling curves. Examples of semi-Riemannian symmetric spaces are provided, together with  how to construct both types of  rollings.
It is always assumed throughout the paper  that nonholonomic constraints of no-slip and no-twist are required in both situations, and 
the rollings are confined to semi-Riemannian manifolds.

The organization of the paper is the following. After  setting the notation, we discuss in Section~\ref{versus} the intrinsic rolling versus extrinsic. Section~\ref{sec:RolSymSp} is dedicated to rolling of symmetric spaces on flat spaces. Finally, we include Section~\ref{RollingStiefel} with the rolling of Stiefel mani\-folds, in order to illustrate the difference in the construction of rolling motions for a reductive homogeneous manifold, that is not a symmetric semi-Riemannian manifold.
 
\section{Background and notations}\label{sec:background}
In this section we revisit the most important known concepts and results that will be used in the paper, and introduce the necessary notations.
The main reference is the book of O'Neill~\cite{ONeill}, where the reader may find further details.


 \subsection{Semi-Riemannian manifolds}
 
A semi-Riemannian manifold $M$ is a smooth manifold endowed with a non-degenerate symmetric tensor $g(.\,,.)$. We write $n=\dim M$, and denote by $p$  the number of positive eigenvalues of the tensor $g$, so that  $n-p$ is the number of negative eigenvalues of $g$. The crucial example of a semi-Riemannian manifold is the semi-Euclidean vector space $\mathbb R^{p,n-p}$ with the semi-Euclidean product 
$$
\langle x,y\rangle_{p,n-p}=\sum_{k=1}^{p}x_ky_k-\sum_{k=p+1}^{n}x_ky_k,\quad x,y\in \mathbb R^{p,n-p}.
$$
Another important example is a vector space $V$ with a bilinear symmetric non-degenerate form $(.\,,.)_{p,n-p}$. We will often  refer to $(.\,,.)_{p,n-p}$ as a scalar product and simply write $(.\,,.)$ in case there is no  need to specify the signature.

Let $(V,(.\,,.)_{p,n-p})$ be a semi-Riemannian vector space. The isometric embedding map will be denoted by
$$
\iota\colon M\to V.
$$
On existence of such an embedding see~\cite{Clarke}. 
For the moment, we will identify the manifold $M$ with its image under the embedding, that is, notationally, $\iota (M)=M$. 
The semi-Riemannian metric $g(.\,,.)$ on the embedded manifold $M$ is inherited from the semi-Riemannian product $(.\,,.)_{p,n-p}$ in the ambient space $V$.

The isometric embedding of $M$ into $V$ splits the tangent space of $V$, at a point $m \in M$, into a direct sum:
\begin{equation} \label{split}
T_mV = T_mM \oplus (T_mM)^\perp, \quad m \in M,
\end{equation}
where $^\perp$ denotes the orthogonal complement with respect to $(.\,,.)_{p,n-p}$. Note that the tangent space $T_mM$ and the normal space $(T_mM)^\perp$ are nondegenerate subspaces of $(V,(.\,,.)_{p,n-p})$. According to this, any vector $v \in T_mV,$ $m \in M$, can be written uniquely as the sum $v = v^\top+v^\bot$, where $v^\top\in T_mM$, $v^\bot\in (T_mM)^\perp$. 

In what follows, $\overline{\nabla}$ denotes the Levi-Civita connection on the ambient space $V$, and $\nabla$  for the Levi-Civita connection on $M$.
If $X$ and $Y$ are tangent vector fields on $M$, and $\Upsilon$ is a normal vector field on $M$, then
$$\nabla_X Y(p) = \left(\overline{\nabla}_{\bar{X}} \bar{Y}(p) \right)^\top,\quad\nabla_X^\bot \Upsilon(p) = \left(\overline{\nabla}_{\bar{X}} \bar{\Upsilon}(p) \right)^\bot, \quad p \in M,$$
where $\bar{X}$, $\bar{Y}$ and $\bar{\Upsilon}$ are any local extensions to $V$ of the vector fields $X$, $Y$ and ${\Upsilon}$, respectively. 
If $Z(t)$ and $\Upsilon(t)$ are vector fields along a curve $\alpha(t)$, we use $\frac{D_{\alpha(t)}}{dt} Z(t)$ to denote the covariant derivative of $Z(t)$ along $\alpha(t)$ and $\frac{{D}_{\alpha(t)}^\bot}{dt} \Upsilon(t)$ for the normal covariant derivative of $\Upsilon(t)$ along $\alpha(t)$ (these notations are according to~\cite[p. 119]{ONeill}). Again, to simplify notations, in cases where it is clear what is the curve along which the covariant derivative is considered, we may simply write $\frac{{D}}{dt}$ and $\frac{{D}^\bot}{dt}$ instead of the above. 
 Observe that an isometric imbedding of $M$ into $V$ induces the equalities
\begin{equation}\label{eq:cov_derivatives}
\frac{D}{dt}\, Z(t) = \left(\frac{d}{dt}\, Z(t) \right)^\top, \qquad\frac{D^\bot}{dt}\, \Upsilon(t) = \left(\frac{d}{dt}\, \Upsilon(t) \right)^\bot.
\end{equation}

A tangent vector field $Y(t)$ along an absolutely continuous curve $\alpha(t)$ is {\it tangent parallel} if $\frac{D}{dt} Z(t) = 0$, for almost every $t$. Similarly, a normal vector field $\Upsilon(t)$ along $\alpha(t)$ is {\it normal parallel} if $\frac{D^\bot}{dt} \Upsilon(t)=0$, for almost every $t$.

From now on we assume that all curves are absolutely continuous on some real interval $I=[0,T]$, $T >0$ and, even if not said, conditions involving derivatives are valid only for values of the parameter $t$ for which they are well defined.

We denote by $\SE(V)$ the Lie group of semi-Riemannian isometries of the space $(V,(.\,,.)_{p,n-p})$. It can be shown that $SE(V)=\SO(V)\ltimes V$, where, by abuse of notation, $V$ is the subgroup of translations on the vector space $V$, and $\SO(V)$ is the connected component containing identity $e$ of the group of isometries $O(V)$, preserving the orientation of both positive definite and negative definite subspaces of $V$. 
Elements in  $\SE(V)$ will be represented by pairs $g =(R,s)$, $R\in \SO(V)$, $s\in V$, and in this representation the action of $\SE(V)$ on $V$ is  denoted by $(g,v)\mapsto g.v:=R.v+s$, $v\in V$, where $(R,v)\mapsto R.v$ denotes the action of $\SO(V)$ on $V$.  The group product in  $\SE(V)$ is defined as $(R_2,s_2)(R_1,s_1)=(R_2R_1, s_2+R_2.s_1)$.  It then follows that $(e,0)$ is the group identity in $\SE(V)$, and $(R,s)^{-1}=(R^{-1},-R^{-1}.s)$.


\section{Intrinsic versus extrinsic rolling}\label{versus}


We want to recall the definition of a rolling of a semi-Riemannian manifold $M$ over a  semi-Riemannian manifold $\M$ along a given curve $\alpha\colon I\to M$ with the restrictions of no-slip and no-twist.
There are two notions of such a rolling, commonly referred to as "intrinsic" and "extrinsic", that currently exist in the literature. Intrinsic rolling of the Riemannian manifold, introduced in~\cite{Ag,BH}, and also used, for ins\-tance, in~\cite{CMK,MOL}. The intrinsic rolling of  semi-Riemannian manifolds was studied in~\cite{MarLeite}, and the extrinsic rolling of particular families of semi-Riemannian manifolds was treated in~\cite{CL2012, JZ, KL2011,  ML2022}.  The difference between the two definitions is that an "intrinsic" rolling doesn't require that semi-Riemannian manifolds $M$ and $\M$ are isometrically embedded into $(V,(.\,,.)_{p,n-p})$, meanwhile an "extrinsic" rolling presumes such an embedding.

In the following definitions, the semi-Riemannian manifolds $(M,g)$ and $(\M,\widehat g)$ have equal dimension and the semi-Riemannian metric tensors have equal signature. We call an isometry $A\colon T_mM\to T_{\widehat m}\M$ oriented if it preserves the orientation of the positive definite and the negative definite subspaces of $T_mM$ and $T_{\widehat m}\M$.
 \begin{definition} \label{def_intrinsic}  {\bf{Intrinsic rolling.}} A curve $\alpha (t)$ on $M$ is said to roll on a curve $\widehat{\alpha}(t)$ on $\M$ if there exists  an oriented  isometry $A(t): T_{\alpha (t)}M\to T_{\widehat{\alpha} (t)}\M$ such that  
 \be\label{iso}
 \dot{\widehat{\alpha}}(t)=A(t)\dot\alpha (t), \quad \text{and}\ee
$$ 
\ba{l}\text{$A(t)X(t)$ is a parallel vector field in $\M$ along $\widehat{\alpha} (t)$  if and}\\ \text{only if $X(t)$ is a parallel vector field in $M$ along $\alpha (t)$}.
  \ea
  $$  
  The triplet  $(\alpha(t),\widehat{\alpha}(t),A(t))$ is called a \it{ rolling curve}. \end{definition}

In the Riemannian case, the definition of ``extrinsic" rolling initiated by K. Nomizu~\cite{N1} and presented more formally by R.W. Sharpe~\cite{Sh}, makes use of the isometric embedding of $M$ and $\M$ in an ambient space.
Here we use a definition of extrinsic rolling that is more general then that used by~\cite{Sh}. This extended class of rollings is better suited for making the bridge with control theory and also for comparison with Definition~\ref{def_intrinsic}. It includes the presence of a semi-Riemannian vector space $(V,(.\,,.)_{p,n-p})$, orientability for the group of semi-Riemannian motions $\SE(V)$, and the replacement of  piecewise continuous curves  by absolutely continuous curves, see also~\cite{MOL, MarLeite}.

\begin{definition}{\bf{Extrinsic Rolling.}} \label{def_extrinsic}
 An absolutely continuous curve   $g(t)$ in $\SE(V)$, defined on an interval $I=[0,T]$,  is said to roll a curve $\alpha(t) $ in $M$  onto a curve $\widehat{\alpha}(t)$ in  $\M$, without slipping and without twisting, if
 \begin{itemize}
\item [ 1] $g(t)\alpha(t)=\widehat{\alpha}(t)$, for all $  t\in I$,
\item[ 2]  $d_{\alpha(t)}g(t)\, T_{\alpha(t)}M = T_{\widehat{\alpha}(t)} \M$, for all $t\in I$.
\item [ 3] No-slip condition:
$$\dot{\widehat{\alpha}}(t)= d_{\alpha(t)}g(t)\, \dot{\alpha}(t),\,\, \text{for almost every $t$};$$
\item [ 4] No-twist condition (tangential part)
$$d_{\alpha(t)}g(t)\, \frac{D}{dt}\, Z(t) = \frac{D}{dt}\, d_{\alpha(t)}g(t)\, Z(t),$$
for any tangent vector field $Z(t)$ along $\alpha(t)$ and almost every $t$;
\item [ 5] No-twist condition (normal part)
$$d_{\alpha(t)}g(t)\,\frac{D^\bot}{dt}\, \Psi(t) = \frac{D^\bot}{dt}\, d_{\alpha(t)}g(t)\, \Psi(t),$$
for any normal vector field $\Psi(t)$ along $\alpha(t)$ and almost every $t$;
\item [ 6]
$d_{\alpha(t)} g(t)|_{T_{\alpha(t)} M}:T_{\alpha(t)} M \to T_{\widehat{\alpha}(t)} \M$ is orientation preserving.
\vspace*{0,3 cm}

\noindent The curve $g(t)$ that satisfies the  above conditions is called {\emph{rolling map}} along the curve $\alpha(t)$ (also called {\emph{rolling curve}}), and $\widehat{\alpha}(t)$ is the {\emph{development}} of $\alpha (t)$ on $\M$.
\end{itemize}
\end{definition}
  From now on, we may refer to rolling without slipping and without twisting simply as ``rolling''. The first two conditions in Definition \ref{def_extrinsic} are called  ``rolling conditions''.  Notice that the second rolling condition and the splitting~\eqref{split} also implies:
\begin{equation}\label{new2}
 d_{\alpha(t)}g(t)(T_{\alpha(t)}M)^\bot = (T_{\widehat{\alpha}(t)} \\\widehat M)^\bot.
\end{equation}
The no-slip and no-twist conditions can be seen as nonholonomic constraints. They give rise to equations for the velocity of the rolling map, usually called the {\emph{kinematic equations of rolling}.

At first glance, the no-slip and no-twist  conditions in Definition~\ref{def_extrinsic} may look different from those in~\cite{Sh}, however they are equivalent, as proven in \cite{MOL, MarLeite}.  When dealing with concrete examples these nonholonomic constraints are easier to handle when written as in~\cite{Sh}. For that reason, after introducing some necessary notations, we rewrite conditions 3, 4 and 5 in Definition \ref{def_extrinsic} using the terminology in~\cite{Sh}.

 For each action $g(t)=(R(t),s(t))\in \SE(V)$ on $V$, defined by $g(t).p=R(t).p+s(t)$,   the differential (or tangent map) of $g(t)$ at $p\in V$ is given by
\be \label{notation-sharpe00}
d_{p}g(t)v:= \left.\frac{d}{d\epsilon}
    g(t).p(\epsilon)\right|_{\epsilon=0}=R(t).v,\ee
where $\epsilon\mapsto p(\epsilon)$ is a curve in $V$ satisfying $p(0)=p, \,\frac{dp}{d\epsilon}(0)=v$.
If $\dot{g}(t)$ denotes the time derivative of the curve $g(t)$, i.e., 
$$
\dot g(t).p=\left.\frac{d}{d\epsilon}g(\epsilon).p\right|_{\epsilon=t}=\dot R(t).p+\dot s(t),
$$
then, since  $g^{-1}=(R^{-1},-R^{-1}.s)$, we can  define 
\be \label{notation-sharpe0}
\left(\dot g(t) g^{-1}(t)\right).p:=\left.\frac{d}{d\epsilon}
    g(\epsilon) .(g^{-1}(t).p) \right|_{\epsilon=t}=\dot R(t)R^{-1}(t).(p-s(t))+\dot s(t),
\ee so that
\begin{equation} \label{notation-sharpe}
d_{p}\left(\dot g(t) g^{-1}(t)\right).v:=\left.\frac{d}{d\epsilon}
    \left(\dot g(t)g^{-1}(t)\right).p(\epsilon)\right|_{\epsilon=0}=\dot R(t)R^{-1}(t). v.
\end{equation} 
\begin{proposition}\label{prop-sharpe}
Conditions \emph{3}, \emph{4} and \emph{5} in Definition \ref{def_extrinsic} are, respectively, equivalent to:
\begin{itemize}
\item[3'] No-slip condition:
 $$(\dot{g}(t)g^{-1}(t)).\widehat\alpha(t)=0,\,\, \text{for almost every $t$};$$
\item[4'] No-twist condition (tangential part):
 $$d_{\widehat\alpha(t)}(\dot {g}(t)g^{-1}(t))\, T_{\widehat\alpha(t)}{\M}\subset (T_{\widehat\alpha(t)}\M )^\perp, \,\, \text{for almost every $t$};$$ 
\item [ 5'] No-twist condition (normal part):
$$d_{\widehat\alpha(t)}(\dot {g}(t)g^{-1}(t))\, (T_{\widehat\alpha(t)}{\M})^\perp\subset T_{\widehat\alpha(t)}{\M}, \,\, \text{for almost every $t$};$$ 
\end{itemize}
\end{proposition}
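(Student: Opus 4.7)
The plan is to express everything in the $g(t)=(R(t),s(t))$ representation of $\SE(V)$ and to exploit the fact that the rolling condition~2 together with~\eqref{new2} guarantees that $R(t)$ intertwines the tangential/normal decompositions at $\alpha(t)$ and at $\widehat\alpha(t)$. In particular, for any $v\in T_{\alpha(t)}V$ one has $R(t).(v^\top)=(R(t).v)^\top$ and $R(t).(v^\bot)=(R(t).v)^\bot$, where on the left the decomposition is with respect to $T_{\alpha(t)}M$ and on the right with respect to $T_{\widehat\alpha(t)}\M$. This symmetry is what makes the rewriting in the style of~\cite{Sh} possible, and checking it carefully is the step that is most liable to confusion.

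For the equivalence 3 $\Leftrightarrow$ 3', I would differentiate $\widehat\alpha(t)=R(t).\alpha(t)+s(t)$ to obtain
\begin{equation*}
\dot{\widehat\alpha}(t)=\dot R(t).\alpha(t)+R(t).\dot\alpha(t)+\dot s(t)=d_{\alpha(t)}g(t)\,\dot\alpha(t)+\bigl(\dot R(t).\alpha(t)+\dot s(t)\bigr),
\end{equation*}
so condition~3 is equivalent to the vanishing of $\dot R(t).\alpha(t)+\dot s(t)$. On the other hand, applying~\eqref{notation-sharpe0} with $p=\widehat\alpha(t)$ and using $\widehat\alpha(t)-s(t)=R(t).\alpha(t)$ gives
\begin{equation*}
(\dot g(t)g^{-1}(t)).\widehat\alpha(t)=\dot R(t)R^{-1}(t).(\widehat\alpha(t)-s(t))+\dot s(t)=\dot R(t).\alpha(t)+\dot s(t),
\end{equation*}
which is exactly the quantity whose vanishing characterises condition~3'.

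For the equivalence 4 $\Leftrightarrow$ 4', take a tangent vector field $Z(t)$ along $\alpha(t)$ and use~\eqref{eq:cov_derivatives} to write $\frac{D}{dt}Z(t)=\bigl(\dot Z(t)\bigr)^\top$ at $\alpha(t)$. Then
\begin{equation*}
d_{\alpha(t)}g(t)\,\tfrac{D}{dt}Z(t)=R(t).(\dot Z(t))^\top=(R(t).\dot Z(t))^\top,
\end{equation*}
the last equality being the intertwining property above, while
\begin{equation*}
\tfrac{D}{dt}\bigl(d_{\alpha(t)}g(t)Z(t)\bigr)=\bigl(\dot R(t).Z(t)+R(t).\dot Z(t)\bigr)^\top.
\end{equation*}
Subtracting, condition~4 is equivalent to $(\dot R(t).Z(t))^\top=0$ at $\widehat\alpha(t)$ for every tangent field $Z(t)$ along $\alpha(t)$. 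Setting $\widetilde Z(t)=R(t).Z(t)$, which ranges over all tangent fields of $\M$ along $\widehat\alpha(t)$ by rolling condition~2, and invoking~\eqref{notation-sharpe} so that $\dot R(t).Z(t)=\dot R(t)R^{-1}(t).\widetilde Z(t)=d_{\widehat\alpha(t)}(\dot g(t)g^{-1}(t)).\widetilde Z(t)$, the condition becomes exactly~4'.

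The equivalence 5 $\Leftrightarrow$ 5' is proved by the same computation applied to a normal field $\Psi(t)$, using the normal version of~\eqref{eq:cov_derivatives} and the fact that $R(t)$ carries $(T_{\alpha(t)}M)^\bot$ isometrically onto $(T_{\widehat\alpha(t)}\M)^\bot$ by~\eqref{new2}. The main obstacle, as noted, is to justify that the top/bot projections commute with $R(t).(\cdot)$; once that is established the rest reduces to bookkeeping with $\dot R R^{-1}$ via~\eqref{notation-sharpe}.
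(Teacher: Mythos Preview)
Your argument is correct. The paper itself does not supply a proof of this proposition: immediately before stating it, the authors write that the equivalence ``is proven in \cite{MOL, MarLeite}'', and after the statement they move directly on to other matters. So there is no in-paper proof to compare against; your computation fills the gap the authors chose to leave to the references.

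A couple of small remarks on presentation. First, you are implicitly assuming that the rolling conditions~1 and~2 (hence also~\eqref{new2}) hold when establishing the equivalences; this is the intended reading, but it is worth saying so explicitly, since the intertwining identity $R(t).(v^\top)=(R(t).v)^\top$ is exactly the content of condition~2 together with~\eqref{new2}, and without it the argument for 4 and 5 does not start. Second, your justification of that intertwining property could be made one line longer: write $v=v^\top+v^\bot$, apply $R(t)$, and observe that $R(t).v^\top\in T_{\widehat\alpha(t)}\M$ and $R(t).v^\bot\in(T_{\widehat\alpha(t)}\M)^\bot$ by condition~2 and~\eqref{new2}, so by uniqueness of the orthogonal decomposition $(R(t).v)^\top=R(t).v^\top$. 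With that spelled out, the proof is complete and self-contained.
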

It was proved in~\cite{Sh} that given a curve $\alpha (t)$ in $M$ there always exists a unique rolling map $g(t)$ that rolls a Riemannian manifold $M$ on a Riemannian manifold $\M$ along $\alpha$.} The proof can be literally extended to the rolling of semi-Riemannian manifolds, since the arguments in~\cite{Sh} do not rely on the  positive definite property of the metric tensor, but rather on being non-degenerate.
 \begin{remark}
The definition of rolling map doesn't exclude the possibility that $g (t)$ is the identity in $\SE(V)$, otherwise the existence of a rolling map for each curve 
in $M$ would not be guaranteed. This is clearly seen, for instance, in the system consisting of a cylinder rolling on the tangent plane at a point, when the rolling curve lies in the straight line of intersection of the two manifolds.
 \end{remark}
The no-twist conditions in Definition \ref{def_extrinsic} can also be rewritten in terms of parallel vector fields as follows. This is particularly important for the comparison with the intrinsic rolling.

\begin{proposition}\label{prop-notwist}
Conditions \emph{4} and \emph{5} of Definition \ref{def_extrinsic} are, respectively, equivalent to:
\begin{itemize}
\item[4''] No-twist condition (tangential part):
 A vector field $Z(t)$ is tangent parallel along the curve $\alpha(t)$ if, and only
if, $d_{\alpha(t)}g(t)(Z(t))$ is tangent parallel along $\widehat\alpha(t)$.

\item[5''] No-twist condition (normal part):
 A vector field $\Psi (t)$ is normal parallel along the curve $\alpha(t)$ if, and only
if, $d_{\alpha(t)}g(t)(\Psi(t))$ is normal parallel along $\widehat\alpha (t)$.
\end{itemize}
\end{proposition}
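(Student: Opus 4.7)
The plan is to prove each equivalence (4 $\Leftrightarrow$ 4'' and 5 $\Leftrightarrow$ 5'') by the same two-step template: the forward implication follows trivially by specialising to parallel fields, and the reverse implication uses a parallel frame to reduce the general case to the parallel one. I will focus on the tangential part, since the normal part is formally identical once the Levi-Civita connection is replaced by the normal connection and $T_{\alpha(t)}M$ by $(T_{\alpha(t)}M)^\perp$, both of which are sent isometrically onto their counterparts along $\widehat\alpha(t)$ by virtue of rolling condition 2 and \eqref{new2}.

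For the implication 4 $\Rightarrow$ 4'', suppose $Z(t)$ is tangent parallel along $\alpha(t)$, so $\frac{D}{dt}Z(t)=0$. Then condition 4 gives $\frac{D}{dt}\bigl(d_{\alpha(t)}g(t) Z(t)\bigr)=d_{\alpha(t)}g(t)\frac{D}{dt}Z(t)=0$, hence $d_{\alpha(t)}g(t)Z(t)$ is tangent parallel along $\widehat\alpha(t)$. The converse direction inside 4'' is obtained by running the same calculation through the inverse map $d_{\widehat\alpha(t)}g^{-1}(t)$, which exists and has the analogous intertwining property because $g(t)\in\SE(V)$ is invertible and conditions 1 and 2 are symmetric in the two manifolds.

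For the harder implication 4'' $\Rightarrow$ 4, I will build a parallel orthonormal frame and use linearity. Let $n=\dim M$ and pick, via parallel transport along $\alpha(t)$, a tangent frame $\{e_1(t),\dots,e_n(t)\}$ of $T_{\alpha(t)}M$ that is tangent parallel (so $\frac{D}{dt}e_i(t)=0$). By hypothesis 4'', the frame $\{\tilde e_i(t)=d_{\alpha(t)}g(t)e_i(t)\}$ is tangent parallel along $\widehat\alpha(t)$, and it remains a basis of $T_{\widehat\alpha(t)}\widehat M$ since $d_{\alpha(t)}g(t)$ restricts to a linear isomorphism of tangent spaces. Any tangent vector field $Z(t)$ along $\alpha(t)$ can be expanded as $Z(t)=\sum_i z_i(t)e_i(t)$, with coefficients $z_i(t)$ inheriting the absolute continuity of $Z$. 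Then, using linearity of the covariant derivative and the parallel property of $e_i$,
\begin{equation*}
\frac{D}{dt}Z(t)=\sum_i \dot z_i(t)\,e_i(t),\qquad d_{\alpha(t)}g(t)\frac{D}{dt}Z(t)=\sum_i \dot z_i(t)\,\tilde e_i(t).
\end{equation*}
On the other hand, $d_{\alpha(t)}g(t)Z(t)=\sum_i z_i(t)\tilde e_i(t)$, and the same computation in $\widehat M$, using parallelism of $\tilde e_i$, yields $\frac{D}{dt}d_{\alpha(t)}g(t)Z(t)=\sum_i \dot z_i(t)\tilde e_i(t)$. The two expressions coincide, which is exactly condition 4.

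The main subtlety I foresee is the bookkeeping of regularity: one needs $\{e_i(t)\}$ to be smooth (guaranteed by parallel transport along an absolutely continuous curve producing an absolutely continuous, and in fact as regular as $\alpha$, frame) and the linearity arguments to hold almost everywhere. Once this is fixed, the normal statement 5 $\Leftrightarrow$ 5'' is obtained verbatim by replacing $(T_{\alpha(t)}M,\nabla,\frac{D}{dt})$ with $((T_{\alpha(t)}M)^\perp,\nabla^\perp,\frac{D^\perp}{dt})$ and using \eqref{new2} in place of rolling condition 2 to know that $d_{\alpha(t)}g(t)$ restricts to a linear isomorphism between normal spaces and sends a normal parallel frame to a normal parallel frame.
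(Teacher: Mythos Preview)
Your proof is correct and follows essentially the same approach as the paper: the forward implication is immediate from condition~4 (the paper notes that since $d_{\alpha(t)}g(t)$ is an isomorphism, $\frac{D}{dt}Z=0$ iff $\frac{D}{dt}(d_{\alpha(t)}g(t)Z)=0$), and the reverse implication is obtained exactly as you describe, by expanding an arbitrary tangent field in a parallel frame and using linearity. The paper likewise treats only the tangential part and declares the normal part analogous.
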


\begin{proof}
We prove the tangential part only. The proof of the normal part can be done similarly.

 It is clear that $\frac{D}{dt}Z (t)=0$ if, and only if, $\frac{D}{dt}\left(d_{\alpha(t)}g(t)(Z(t))\right)=0$. Consequently, condition 4 of Definition \ref{def_extrinsic} implies condition 4'' above.

To prove that condition 4'' implies condition 4 of Definition \ref{def_extrinsic}, let $Z (t)$ be an arbitrary tangent vector field along  $\alpha(t)$ and $\{E_1(t),\ldots, E_n(t)\} $, $n=\dim (M)$, be a parallel tangent frame along  $\alpha(t)$, so that
$$Z (t)=\sum_{i=1}^nz_i(t)E_i(t)\quad \text{and}\quad \frac{D}{dt}Z (t)=\sum_{i=1}^n\dot{z}_i(t)E_i(t).$$
Now define $\widehat{E}_i(t):=d_{\alpha(t)}g(t)(E_i(t))$. Taking into account assumption $4'$, we can guarantee that $\{\widehat{E}_1(t),\ldots, \widehat{E}_n(t)\} $ is a parallel tangent frame along the development curve $\widehat\alpha(t)$. Since $d_{\alpha (t)}g(t)$ is a linear isomorphism, using properties of the covariant derivative we obtain $$d_{\alpha(t)}g(t)\left(\frac{D}{dt}Z (t)\right)=\sum_{i=1}^n\dot{z}_i(t)d_{\alpha(t)}g(t)(E_i(t))=
\sum_{i=1}^n\dot{z}_i(t)\widehat{E}_i(t)$$
and
$$\frac{D}{dt}\left(d_{\alpha(t)}g(t)(Z(t))\right)=\frac{D}{dt}\left(
\sum_{i=1}^nz_i(t)\widehat{E}_i(t)\right)=\sum_{i=1}^n\dot{z}_i(t)\widehat{E}_i(t).$$
Therefore, condition 4 in Definition~\ref{def_extrinsic} follows.
\end{proof}

 \begin{remark} \label{codim} It is clear from  Proposition~\ref{prop-notwist}  that the tangent part of the  no-twist condition is always satisfied when the manifolds $M$ and $\widehat{M}$ are one-dimensional, and the normal no-twist condition is always satisfied when those manifolds have co-dimension one.
\end{remark}

In order to relate the two seemingly very different definitions of rolling when both $M$ and $\M$ are isometrically embedded in the semi-Riemannian vector space $(V,(.\,,.)_{p,n-p})$, we also need to compare $A(t)$, the part responsible for the isometric mapping of the tangent spaces in the intrinsic rolling definition, with the rolling map $g(t)=(R(t),s(t))\in \SE(V)$, in the extrinsic definition. 

 Since $\SE(V)=\SO(V)\ltimes V$ and $\SO(V)$ both act on $V$, if $\alpha (t)$ is a curve in $V$, then, for any $
g(t)=(R(t),s(t))\in \SE(V)$ and  any tangent vector field $Z(t)$ along $\alpha (t)$  we have 
\begin{equation}\label{new1}
d_{\alpha(t)}g(t).Z(t)= d_{\alpha(t)}R(t)Z(t) =R(t).Z(t).
\end{equation}
\begin{remark}\label{new_rem}
According to \emph{(\ref{new1})}, we can refer to the restriction of $d_{\alpha(t)}g(t)$ to $T_{\alpha(t)} M$, which is the same as the restriction of $d_{\alpha(t)}R(t)$ to $T_{\alpha(t)} M$, as the restriction of $R(t)$ to $T_{\alpha(t)} M$. This abuse of terminology simplifies the exposition that follows. 
\end{remark}

The following proposition provides  a relationship between the intrinsic and  the extrinsic rolling when $M$ and $\M$ are isometrically embedded in $V$.
\begin{proposition} \label{prop4}Assume that $(\alpha(t),\widehat{\alpha}(t),  A(t) )$ is a rolling curve in the sense of Definition \ref{def_intrinsic}. Let $g(t)=(R(t),s(t)) $ be a curve in $SE(V)$ such that the restriction of $R(t)$ to $T_{\alpha(t)}M$  is equal to $A(t)$ and $s(t)=\widehat{\alpha}(t)-R(t).\alpha(t)$. Then $g(t)$ satisfies  conditions 1 through 4 and 6 of Definition \ref{def_extrinsic}.

Conversely, if $g(t)=(R(t),s(t))$ is a curve in $SE(V)$ that satisfies conditions 1 through 4 and 6 of Definition \ref{def_extrinsic}, then $(\alpha(t), \widehat{\alpha}(t),A(t))$ is an intrinsic  rolling curve, where $A(t)$  is the restriction of $R(t)$ to $T_{\alpha(t)}M$. This happens, in particular, if $g(t)$ is a rolling map along $\alpha(t)$.
\end{proposition}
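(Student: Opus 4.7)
The plan is to verify the conditions one by one in each direction, exploiting the observation in Remark~\ref{new_rem} that on tangent vectors $d_{\alpha(t)}g(t)$ acts as the rotation part $R(t)$, so that by construction the restriction to $T_{\alpha(t)}M$ equals $A(t)$. This converts most conditions into tautologies and reduces the nontrivial content to the parallel-transport reformulation of no-twist already supplied by Proposition~\ref{prop-notwist}.

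For the forward direction, I first check condition~1: by the definition of $s(t)$, one has $g(t).\alpha(t)=R(t).\alpha(t)+s(t)=\widehat{\alpha}(t)$. For condition~2, since $R(t)|_{T_{\alpha(t)}M}=A(t)$ and $A(t):T_{\alpha(t)}M\to T_{\widehat\alpha(t)}\M$ is a surjective isometry, the image of $T_{\alpha(t)}M$ under $d_{\alpha(t)}g(t)$ is $T_{\widehat\alpha(t)}\M$. Condition~3 follows from \eqref{iso}: $\dot{\widehat\alpha}(t)=A(t)\dot\alpha(t)=R(t).\dot\alpha(t)=d_{\alpha(t)}g(t)\dot\alpha(t)$. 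Condition~6 follows because $A(t)$ is oriented by hypothesis and coincides with $d_{\alpha(t)}g(t)|_{T_{\alpha(t)}M}$. For condition~4, I invoke Proposition~\ref{prop-notwist}: the tangential no-twist is equivalent to saying that $d_{\alpha(t)}g(t)$ matches parallel tangent fields along $\alpha$ with parallel tangent fields along $\widehat\alpha$. Since $d_{\alpha(t)}g(t)$ acts as $A(t)$ on tangent vectors, this is exactly the parallel-transport clause in Definition~\ref{def_intrinsic}.

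The converse direction is essentially the same chain of implications read backwards. Given $g(t)=(R(t),s(t))$ satisfying conditions~1--4 and~6, define $A(t):=R(t)|_{T_{\alpha(t)}M}$. Condition~2 guarantees that the image of $A(t)$ is $T_{\widehat\alpha(t)}\M$, and since $R(t)\in\SO(V)$ preserves $(\cdot,\cdot)_{p,n-p}$, the restriction $A(t)$ is an isometry between the corresponding tangent spaces; condition~6 ensures that it is oriented. The no-slip equality \eqref{iso} in Definition~\ref{def_intrinsic} is then immediate from condition~3 together with \eqref{new1}. The parallel-transport requirement in Definition~\ref{def_intrinsic} is obtained by applying Proposition~\ref{prop-notwist} to translate condition~4 into statement~4''. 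Since in the particular case where $g(t)$ is a rolling map all of conditions 1--6 hold, the hypothesis of the converse is satisfied.

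I do not expect any serious obstacle here: the only nontrivial ingredient is Proposition~\ref{prop-notwist}, which has already been established, and the remaining content is purely a matter of bookkeeping about how $d_{\alpha(t)}g(t)$ splits according to \eqref{split} and how its tangential component agrees with $A(t)$ by construction. The one point worth being careful about is that conditions~5 (normal no-twist) plays no role in either direction of the statement, which is consistent with the fact that Definition~\ref{def_intrinsic} only involves tangent parallel fields.
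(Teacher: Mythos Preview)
Your proposal is correct and follows exactly the approach the paper indicates: the paper states that the proposition ``is completely obvious in view of Proposition~\ref{prop-notwist}, since the tangential no-twist condition in Definition~\ref{def_extrinsic} is equivalent to the parallel transport condition required by the intrinsic rolling,'' and you have simply spelled out that observation condition by condition. The only difference is that you make explicit the routine verifications of conditions~1,~2,~3 and~6 which the paper leaves implicit.
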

The previous statement is completely obvious in view of Proposition \ref{prop-notwist},  since the tangential no-twist condition in Definition \ref{def_extrinsic} is equivalent to the parallel transport condition required by the intrinsic rolling. According to the last statement of  Proposition \ref{prop4}, if $g(t)=(R(t),s(t))$ is a rolling map along $\alpha(t)$ with development $\widehat{\alpha}(t)$, we say that $R(t)$ defines the intrinsic  rolling curve $(\alpha(t), \widehat{\alpha}(t),A(t))$, where $A(t):= R(t)|_{T_{\alpha(t)}M}$.

 \begin{remark}We now also see precisely the   difference between  the rolling of Definition \ref{def_intrinsic} and the rolling of Definition~\ref{def_extrinsic}. A curve $\alpha(t)$ in $M$ rolls on a curve $\widehat{\alpha}(t)$  in $\M$ independently of the definition used.   However,  in the absence of the normal no-twist condition,  the lifting of the isometry $A(t)$ to an isometry $d_{\alpha(t)}g(t)$ in $T_{\alpha(t)}V$ is not one to one since the latter can be completely arbitrary on the orthogonal complement $(T_{\alpha(t)}M)^\perp$.  If $A^\perp (t): \, (T_{\alpha(t)}M)^\perp \rightarrow (T_{\widehat\alpha(t)}\widehat M)^\perp $ is a map such that any normal parallel vector field along $\alpha(t)$ maps to a normal parallel vector field along $\widehat\alpha(t)$, then $g(t)$ is completely and uniquely defined by
 $$
 d_{\alpha(t)}g(t)|_{T_{\alpha (t)}M}=A(t)\quad \text{and}\quad d_{\alpha(t)}g(t)|_{(T_{\alpha (t)}M)^\perp}=A^\perp (t).
 $$
  The arbitrariness of $A(t)$, which due to Remark \ref{new_rem} can be seen as an arbitrariness of $R(t)$,  is removed by adding the normal part of the no-twist condition, for then   there is a one to one correspondence   between $A(t)$  that rolls $\alpha(t)$ onto $\widehat{\alpha}(t)$ in the sense of Definition \ref{def_intrinsic}  and the  rolling map $g(t)=(R(t),s(t))$ in $SE(V)$ that rolls $\alpha(t)$  onto $\widehat{\alpha}(t)$ in the sense of Definition \ref{def_extrinsic}. In fact, $d_{\alpha(t)}(g(t))$ is equal to $A(t)$ on the tangent space $T_{\alpha(t)}M$, and is uniquely determined on the orthogonal complement by the normal no-twist condition.
\end{remark}
We have already seen in Proposition \ref{prop4} that if $g(t)=(R(t),s(t))$ is a rolling map  along a  curve $\alpha (t)$ with development $\widehat{\alpha}(t)$, then the triple $(\alpha (t), \widehat{\alpha}(t), A(t))$, where for $A(t):=R(t)|_{T_{\alpha (t)}M}$, is an intrinsically rolling curve. In other words, each extrinsic rolling map determines a unique intrinsic rolling curve.

 However, we may perturb $R(t)$ so that the normal part of the no-twist condition is violated and still obtain an intrinsic rolling curve of $\alpha (t)$ on $\widehat{\alpha}(t)$. The next proposition makes this statement clear. We use the symbol $\circ$ to denote the composition of linear maps. 

\begin{proposition} \label{RRtil} Suppose that $g(t)=(R(t),s(t))$ is a rolling map along the curve $\alpha(t)$ with development $\widehat{\alpha}(t)$, and $\dot R (t)=\Omega(t)\circ R(t)$, with $\Omega(t)\in\so(V)$. Let   
 $\widetilde R(t)$ be the solution of \be\label {inhom}
\dot{\widetilde R}(t)=(\Omega(t)+\Omega _0(t))\circ\widetilde R(t),\quad \widetilde R(0)=R(0), \ee
where $\Omega_0(t)\in\so(V)$ satisfies 
\be \label{omega0}
\Omega_0(t)(T_{\widehat{\alpha}(t)}\M)=0,\quad \Omega_0(t)(T_{\widehat{\alpha}(t)}\M)^\bot\subseteq (T_{\widehat{\alpha}(t)}\M)^\bot.
\ee
 Then $R(t)$ and $\widetilde R(t)$ define the same intrinsic rolling $(\alpha (t), \widehat{\alpha}(t), A(t))$.
\end{proposition}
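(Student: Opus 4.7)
My strategy is to show that $\widetilde R(t)|_{T_{\alpha(t)} M}$ coincides with $R(t)|_{T_{\alpha(t)} M}$. Once this equality is in hand, Proposition \ref{prop4} immediately yields that both rotations give rise to the same intrinsic rolling curve $(\alpha(t), \widehat\alpha(t), A(t))$ with $A(t) := R(t)|_{T_{\alpha(t)} M}$. The plan rests on the observation that $\Omega_0$ is engineered precisely so that it does not disturb the tangential behaviour of $R$, while only acting nontrivially on the normal directions.

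First I would transcribe the perturbed ODE in the Sharpe form: $\dot{\widetilde R}\,\widetilde R^{-1} = \Omega + \Omega_0$. The tangent no-twist condition for $R$ (Proposition \ref{prop-sharpe}, item 4') gives $\Omega(T_{\widehat\alpha(t)} \widehat M) \subseteq (T_{\widehat\alpha(t)} \widehat M)^\perp$, and the hypothesis $\Omega_0(T_{\widehat\alpha(t)} \widehat M) = 0$ is trivially contained in $(T_{\widehat\alpha(t)} \widehat M)^\perp$. Adding the two, $(\Omega + \Omega_0)(T_{\widehat\alpha(t)} \widehat M) \subseteq (T_{\widehat\alpha(t)} \widehat M)^\perp$, so the tangential part of the no-twist condition is inherited by $\widetilde R$. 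The normal part, as expected, need not be, because $\Omega_0$ maps $(T_{\widehat\alpha(t)} \widehat M)^\perp$ into itself rather than into $T_{\widehat\alpha(t)} \widehat M$.

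Next, pick a tangent parallel orthonormal frame $\{E_1(t),\ldots,E_n(t)\}$ for $M$ along $\alpha$. By Proposition \ref{prop-notwist} applied to the rolling map $R$, the vectors $\widehat E_i(t) := R(t) E_i(t)$ form a tangent parallel orthonormal frame for $\widehat M$ along $\widehat\alpha$. Setting $F_i(t) := \widetilde R(t) E_i(t)$, I would argue that $F_i$ is also tangent parallel along $\widehat\alpha$ in $\widehat M$: this uses condition 2 of Definition \ref{def_extrinsic} for $\widetilde R$ (so that $F_i$ is a bona fide tangent vector field on $\widehat M$) together with the inherited tangential no-twist from the previous paragraph, via Proposition \ref{prop-notwist}. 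Since $F_i(0) = \widetilde R(0) E_i(0) = R(0) E_i(0) = \widehat E_i(0)$ and both $F_i(t)$ and $\widehat E_i(t)$ are tangent parallel along $\widehat\alpha$ with the same initial value, the uniqueness of tangent parallel transport forces $F_i(t) = \widehat E_i(t) = R(t) E_i(t)$ for all $t\in I$. As the $\{E_i(t)\}$ span $T_{\alpha(t)} M$, linearity then gives $\widetilde R(t)|_{T_{\alpha(t)} M} = R(t)|_{T_{\alpha(t)} M} = A(t)$.

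The delicate point — which I expect to be the main obstacle — is justifying that $\widetilde R(t)$ still obeys condition 2 of Definition \ref{def_extrinsic}, namely $\widetilde R(t)(T_{\alpha(t)} M) = T_{\widehat\alpha(t)} \widehat M$. Without this, $F_i(t)$ cannot be interpreted as lying in the tangent bundle of $\widehat M$, and the Proposition \ref{prop-notwist} framework does not directly apply. The way to handle it is to exploit the block structure of the two perturbing elements in the orthogonal decomposition $V = T_{\widehat\alpha(t)} \widehat M \oplus (T_{\widehat\alpha(t)} \widehat M)^\perp$: $\Omega$ is block off-diagonal (swapping the two summands), whereas $\Omega_0$ is block diagonal and kills the tangent block. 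Together with $\widetilde R(0) = R(0)$, which already respects the decomposition, this should yield, via a Gronwall-type argument on the tangential projection of $\widetilde R(t) E_i(t)$, that $\widetilde R$ preserves the decomposition for all $t$, closing the loop.
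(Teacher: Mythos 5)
Your reduction --- prove $\widetilde R(t)|_{T_{\alpha(t)}M}=R(t)|_{T_{\alpha(t)}M}$ and invoke Proposition \ref{prop4} --- is exactly the paper's target, but your route is different: the paper factors $\widetilde R(t)=R(t)\circ S(t)$, shows $\dot S=\Lambda\circ S$ with $\Lambda=R^{-1}\circ\Omega_0\circ R$ annihilating $T_{\alpha(t)}M$ and preserving $(T_{\alpha(t)}M)^\perp$, and then reads off, in an adapted orthonormal frame along $\alpha$, that $S(t)=\bpm I_n&0\\0&S_4(t)\epm$, hence that $S$ fixes tangent vectors; you instead propagate a tangent parallel frame $\{E_i\}$ and appeal to uniqueness of parallel transport. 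Both arguments funnel through the single assertion you rightly flag as the delicate point: that $\widetilde R(t)$ carries $T_{\alpha(t)}M$ to $T_{\widehat\alpha(t)}\M$ (equivalently, that $S(t)$ fixes the \emph{moving} tangent spaces).

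That point is a genuine gap, and the block-structure/Gronwall closure you sketch cannot work, because the decomposition $V=T_{\widehat\alpha(t)}\M\oplus(T_{\widehat\alpha(t)}\M)^\perp$ interacts with the motion of $T_{\alpha(t)}M$ inside $V$. Differentiating $F_i=\widetilde R E_i$ gives $\dot F_i=(\Omega+\Omega_0)F_i+\widetilde R\dot E_i$, and since $E_i$ is tangent parallel, $\dot E_i$ is purely \emph{normal} but nonzero whenever the second fundamental form of $M\subset V$ is nonzero along $\alpha$; as $R\dot E_i$ is normal, the tangential drift of $F_i$ is governed by $\big((\widetilde R-R)\dot E_i\big)^\top$, and $\widetilde R-R$ on normal vectors is of order $t$ (it is exactly what $\Omega_0$ creates), so there is a genuine second-order tangential drift with no smallness for a Gronwall argument to exploit. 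Concretely, let $M=S^1\times\{0\}\subset\R^3$, $\alpha(t)=(\cos t,\sin t,0)$, rolled on its affine tangent line at $e_1$ by $R(t)=$ rotation by $-t$ about $e_3$ (so $\Omega e_1=-e_2$, $\Omega e_2=e_1$, $\Omega e_3=0$), and take the constant $\Omega_0\in\so(V)$ with $\Omega_0e_2=0$, $\Omega_0e_1=ce_3$, $\Omega_0e_3=-ce_1$, which kills $T_{\widehat\alpha(t)}\M=\spn\{e_2\}$ and preserves $\spn\{e_1,e_3\}$. Then $\widetilde R(t)=e^{t(\Omega+\Omega_0)}$ is the rotation about the \emph{fixed} axis $(0,c,1)$; if $\widetilde R(t)\dot\alpha(t)\equiv e_2=R(t)\dot\alpha(t)$ held, then $\langle\dot\alpha(t),(0,c,1)\rangle$ would be constant, but it equals $c\cos t$. (A Taylor expansion gives $\widetilde R(t)\dot\alpha(t)=e_2-\tfrac{c}{2}t^2e_3+O(t^3)$.) So condition 2 fails for $\widetilde R$ and the tangential restrictions disagree. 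Be aware that the paper's own proof elides the same difficulty: rewriting $\dot S=\Lambda\circ S$ in the moving frame $\{b_i(t)\}$ produces frame-derivative terms ($\dot\sigma=[\Lambda]\sigma+\sigma\beta-\beta\sigma$, where $\dot b_j=\sum_k\beta_{kj}b_k$ mixes tangent and normal directions), which destroy the claimed block form $\diag(I_n,S_4)$; the conclusion is safe only when this coupling vanishes, or if the hypothesis $\Omega_0(T_{\widehat\alpha(t)}\M)=0$ is replaced by the condition actually characterizing perturbations that preserve the tangential part, namely $\Omega_0=R\circ\dot S\circ S^{-1}\circ R^{-1}$ for some $S(t)$ fixing $T_{\alpha(t)}M$ pointwise --- such $\Omega_0$ maps $T_{\widehat\alpha(t)}\M$ into $(T_{\widehat\alpha(t)}\M)^\perp$ rather than to zero.
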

\begin{proof} 
We already know that $R(t)$ defines the intrinsic rolling curve $(\alpha (t), \widehat{\alpha}(t), A(t))$, where $A(t)=R(t)|_{T_{\alpha(t)}M}$. In order to prove the statement it is enough to show that  $\widetilde R(t)|_{T_{\alpha(t)}M}=A(t) $. For that, first  notice that $R$ and $\widetilde R$ are related by
$\widetilde R(t)=R(t)\circ S(t)$, where  $S(t)\in SO(V)$ is the solution of
\be
\dot S (t)=(R^{-1}(t)\circ \Omega_0(t)\circ R(t))\circ S(t), \quad S(0)=I.
\ee 
Indeed, since $S(t)\in SO(V)$, we have $\dot S (t)=\Lambda(t)\circ S(t)$, for some $\Lambda(t)\in\so(V)$. And so, $$ \dot{\widetilde R}(t)=\dot R (t)\circ S(t)+R(t)\circ \dot S (t)= (\Omega(t)+R(t)\circ \Lambda (t)\circ  R^{-1}(t))\circ \widetilde R (t),$$
which according to the assumption $\dot{\widetilde R}(t)=(\Omega(t)+\Omega _0(t))\circ \widetilde R(t)$ implies $\Lambda (t)=R^{-1}(t)\circ  \Omega _0(t)\circ R(t)$.
Since by assumption $R(t)$ satisfies 
\be 
\begin{array}{l}
R(t)(T_{\alpha(t)}M) = (T_{\widehat{\alpha}(t)} \widehat M), \,\, \text{and}\,\,
R(t)(T_{\alpha(t)}M)^\bot = (T_{\widehat{\alpha}(t)} M)^\bot,
\end{array}
\ee
and 
$\Omega_0(t)$  satisfies (\ref{omega0}),  we conclude that for $\Lambda(t):=R^{-1}(t)\circ \Omega_0(t)\circ R(t)$,  
$$
\begin{array}{llll} 
\Lambda (t)(T_{\alpha (t)}M)
&=&
R^{-1}(t)\circ \Omega_0(t)(T_{\widehat{\alpha}(t)}\M)=0,
\\
\\
\Lambda (t)(T_{\alpha (t)}M)^\perp
&=&
R^{-1}(t)\circ \Omega_0(t)(T_{\widehat{\alpha}(t)}\M)^\perp\subseteq R^{-1}(t)(T_{\widehat{\alpha}(t)}\M)^\perp
\\
&=&
(T_{\alpha(t)}M)^\perp.\end{array}
$$
Now we are going to choose  a system of coordinates so that $S(t)|_{T_{\alpha(t)}M}$ becomes the identity map; that is $S(t)Z(t)=Z(t)$, for every $Z(t)\in T_{\alpha(t)}M$.
Let $\{ b_1(t),\dots,b_N(t)\}$ be an orthonormal frame  in $V$ along $\alpha (t)$ such that, for every $t$, $\{ b_1(t),\dots b_n(t)\}$ is a basis for $T_{\alpha(t)}M$ and $\{ b_{n+1}(t),\dots,b_N(t)\}$ is a basis for $(T_{\alpha(t)}M)^\perp$. In this system of coordinates  $\Lambda(t)(T_{\alpha(t)}M)$ is represented by the block matrix $\bpm 0_{n,n}\\0_{N-n,n}\epm$,  where $0_{m,n}$ denotes the zero matrix of size $m\times n$, while $\Lambda(t)(T_{\alpha(t)}M)^\perp$ is represented by the block matrix $\bpm 0_{n,N-n}\\\Lambda_0(t)\epm $,  where $\Lambda_0(t)$  is the projection of $\Lambda(t)(T_{\alpha(t)}M)^\perp$ on $(T_{\alpha(t)}M)^\perp$. As a consequence, $\Lambda (t)$ is represented by the block matrix
$$
\Lambda (t)=\bpm 0_{n,n}&0_{n,N-n}\\0_{N-n,n}&\Lambda_0(t)\epm.
$$
So, since $\dot{S}(t)=\Lambda (t)\circ S(t)$, we must have $S(t)=\bpm I_n&0_{n,N-n}\\0_{N-n,n}&S_4(t)\epm$, from what follows that  $S(t)Z(t)=Z(t)$, for every $Z(t)\in T_{\alpha(t)}M$, and, consequently, 
\bean 
\widetilde R(t)|_{T_{\alpha(t)}M}= R(t)|_{T_{\alpha(t)}M}=A(t).
\ean
\end{proof}

These subtle differences between various  notions of rollings are best illustrated  through the comparison of the rolling of a semi-Riemannian manifold $M$ over a flat manifold $\M$, versus the rolling of $M$ on its affine tangent space $\M$ at a fixed point, when $M$ and $\M$ are isometrically embedded in $(V,(.\,,.)_{p,n-p})$. We start by revising  general facts about a rolling on affine tangent spaces.
 
 The following important properties of the rolling map of Riemannian manifolds have also been proved in~\cite{Sh}. The proof uses the arguments involving the group properties of $SE(V)$, that are also true for the semi-Riemannian vector space $(V,(.\,,.)_{p,n-p})$.

\begin{proposition}\label{prop_rolling}
Let $M$, $M_1$ and $M_2$ be manifolds of the same dimension, isometrically embedded in $V$, and  $\alpha(t)$, $\alpha_1(t) $ and $\alpha_2(t)$  curves  in $M$, $M_1$ and $M_2$ respectively, defined in the real interval I, that satisfy $\alpha(0)=\alpha_1(0)=\alpha_2(0).$
\begin{itemize} 
\item  Symmetric property of rolling

 \noindent If $g(t)\in \SE(V)$ is a rolling map of $M$ on $M_1$ along the rolling curve $\alpha (t)$ with development curve $\alpha_1 (t)$, then
 $g^{-1}(t)\in \SE(V)$ is a rolling map of $M_1$ on $M$, along the rolling curve $\alpha_1(t)$ with development curve $\alpha(t)$.
\item Transitive property of rolling

 \noindent  If $g(t)\in \SE(V)$ is a rolling map of $M$ on $M_1$ along the rolling curve $\alpha (t)$ with development curve $\alpha_1 (t)$, and 
$g_1(t)\in \SE(V)$ is a rolling map of $M_1$ on $M_2$ along the rolling curve $\alpha_1(t)$ with development curve $\alpha_2(t)$, then
 $g(t) g_1 (t)\in \SE(V)$, is a rolling map of $M$ on $M_2$, with rolling curve $\alpha(t)$ and development curve $\alpha_2(t)$.
\end{itemize}
\end{proposition}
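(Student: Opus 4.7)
The plan is to verify each of the six conditions of Definition \ref{def_extrinsic} for both parts of the proposition. Throughout, I would exploit the fact, visible from \eqref{notation-sharpe00} and \eqref{new1}, that for any $g(t)=(R(t),s(t))\in SE(V)$ the tangent map $d_p g(t)$ reduces to the linear isometry $R(t)$ and is independent of the base point $p$. This base-point independence is what makes inversions and compositions in $SE(V)$ translate cleanly into inversions and compositions of tangent maps. For the two no-twist conditions 4 and 5, I would pass to the parallel-vector-field formulation of Proposition \ref{prop-notwist}, which is much easier to invert and compose than the covariant-derivative version.

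For the symmetric property, applying $g^{-1}(t)$ to $g(t).\alpha(t)=\alpha_1(t)$ immediately yields condition 1. Because $R(t)$ is a linear isometry sending $T_{\alpha(t)}M$ onto $T_{\alpha_1(t)}M_1$ (and by \eqref{new2} sending the normal space onto the normal space, with the correct orientation), its inverse $R^{-1}(t)$ performs the analogous mappings in the reverse direction, yielding conditions 2 and 6. The no-slip condition 3 follows by applying $R^{-1}(t)$ to $\dot\alpha_1(t)=R(t)\dot\alpha(t)$. For conditions 4 and 5, Proposition \ref{prop-notwist} reformulates them as saying that $R(t)$ sets up a bijection between parallel (tangent, respectively normal) vector fields along $\alpha(t)$ and along $\alpha_1(t)$; inverting this bijection gives the same property for $g^{-1}(t)$.

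For the transitive property, combining $g(t).\alpha(t)=\alpha_1(t)$ with $g_1(t).\alpha_1(t)=\alpha_2(t)$ gives condition 1 for the composition. By the chain rule, and again thanks to base-point independence, the tangent map of the product is the composition of the tangent maps, so conditions 2, 3 and 6 follow by stringing together the corresponding properties of $g$ and $g_1$, noting that a composition of orientation-preserving isometries is orientation preserving. For conditions 4 and 5, Proposition \ref{prop-notwist} again reduces the check to parallel transport: a parallel field along $\alpha(t)$ is carried by $g(t)$ to a parallel field along $\alpha_1(t)$, and then by $g_1(t)$ to a parallel field along $\alpha_2(t)$; since both steps are bijections on parallel fields, so is their composition.

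I expect the routine part to be almost frictionless because in $SE(V)$ the differential carries no positional term; the one point deserving care is keeping the order of composition straight in the semidirect product $SO(V)\ltimes V$, so that the identity $(g_1 g).v=g_1.(g.v)$ is respected and the resulting formula matches the stated product. Verifying that the composition of the affine translation components $s(t)$ and $s_1(t)$ is consistent with the rolling conditions amounts to a direct calculation using the group law $(R_2,s_2)(R_1,s_1)=(R_2R_1,s_2+R_2.s_1)$ already recorded in Section \ref{sec:background}.
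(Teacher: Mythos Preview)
Your proposal is correct and is precisely the natural group-theoretic verification the paper alludes to. Note that the paper does not actually supply a proof of this proposition: immediately before the statement it says the result ``has also been proved in~\cite{Sh}'' and that ``the proof uses the arguments involving the group properties of $SE(V)$, that are also true for the semi-Riemannian vector space $(V,(.\,,.)_{p,n-p})$.'' Your outline is exactly that argument made explicit, and your choice to route conditions 4 and 5 through Proposition~\ref{prop-notwist} is the cleanest way to handle inversion and composition, since the ``if and only if'' in the parallel-field formulation is manifestly symmetric and transitive.

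One remark on the point you already flagged: with the paper's left-action convention $(g_2g_1).v=g_2.(g_1.v)$, the composite that sends $\alpha(t)$ to $\alpha_2(t)$ is $g_1(t)\,g(t)$, not $g(t)\,g_1(t)$ as printed in the statement. Your instinct to check the order against the group law recorded in Section~\ref{sec:background} is exactly right; just make the conclusion explicit rather than leaving it as a caution.
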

 \begin{remark}\label{rem-affine}
Using these two properties, one can reduce the study of rolling a manifold on another  to the simpler situation when the second manifold is the affine tangent space at a point of the first. Properties above have been used in~\cite{LLouro} to derive the kinematic equations of a sphere rolling on another sphere of the same dimension, using the equations of spheres rolling on  affine tangent spaces at a point. Also in the semi-Riemannian case, these properties have been used in~\cite{ML2022} to derive the kinematic equations for rolling a hyperbolic sphere over another. 
\end{remark}

 \section{Rolling of symmetric spaces on flat manifolds}\label{sec:RolSymSp}
 
 
 We start from setting the notation and recalling useful information about symmetric spaces based on~\cite{ONeill}.
 

 \subsection {Symmetric spaces}\label{sec:symmetric}

\begin{definition}
A connected semi-Riemannian manifold $\big(M,g\big)$ is called a semi-Riemannian symmetric space if for each $o\in M$ there exists a diffeomorphic isometric map $\zeta_o\colon M\to M$, called the global isometry of $M$ at $o$, such that $ d_o\zeta_o=-\Id$ on the vector space $T_oM$. 
\end{definition}

The symmetric semi-Riemannian spaces have close relation to Lie groups. The connected identity componnet $G$ of the isometry group acts transitively on $M$. Let $H$ be the isotropy subgroup of a point $o\in M$. Then $M$ can be identified with the homogeneous space $G/H$. Note that the isotropy subgroups of different points are conjugate subgroups of $G$. Let $\fg$ and $\fh$ be the Lie algebras of the Lie groups $G$ and $H$ respectively. Then, the following Cartan decomposition holds, 
\begin{equation}\label{eq:Cartan}
\fg=\fh\oplus \fp,\quad [\fh,\fh]\subset\fh,\quad [\fh,\fp]\subset\fp, \quad[\fp,\fp]\subset\fh.
\end{equation}
We denote by $\tau\colon G\times M\to M$, $(q,m)\mapsto  \tau (q,m)=q.m$ the action of $G$ on $M$. Then, for any fixed $q\in G$, $\tau_q\colon M\to M$ is a diffeomorphism of $M$. Recall that a metric tensor $g(.\,,.)$ on $M$  is said to be \emph{$G$-invariant} if 
$$
g(X(m),Y(m))=g(d_m\tau_q(X),d_m\tau_q(Y)), 
$$
for $q\in G$, and vector fields $X,Y$ on $M$. A scalar product $\langle.\,,.\rangle$ in $\fg$ is said to be \emph{ $\Ad_H$-invariant} if 
$$
\langle \Ad_h X,\Ad_hY\rangle=\langle X,Y\rangle,\quad h\in H,\quad X,Y\in \fg.
$$

Let $\pi$ denote the projection of $G$ on the coset manifold, i.e., $\pi\colon G\to G/H=M$, $g\mapsto g.o=m$ . If $e$ is the identity of $G$, then the map $\pi$ and the differential map
\be \label{diffmap}
d_e\pi\colon T_eG=\fg\to  T_oM
\ee
have the following properties, see~\cite[Chapter 11]{ONeill}:
\begin{enumerate}
\item[\emph{1.}]  $\pi\colon G\to G/H=M$ is a submersion, such that $d_e\pi(\fh)=\{0\}\subset T_oM$, and 
$d_e\pi\colon \fp\to T_oM$ is an isomorphism;
\item[\emph{2.}] $d_e\pi$ makes one-to-one correspondence between $Ad_H$-invariant scalar products on $\fp$ and $G$-invariant metrics on $M$.
\end{enumerate}

\begin{definition}\label{def:semi-RiemannianSubmersion}
Let $(M,g^M)$ and $(N,g^N)$ be two semi-Riemannian mani\-folds and $\pi\colon N\to M$ a submersion such that $T_nN=\mathcal V_n\oplus\mathcal H_n$, with $\mathcal V_n=\ker(d_n\pi)$. Then $\pi$ is called a semi-Riemannian submersion if $\pi^{-1}(m)$ is a Riemannian submanifold of $N$ and the direct sum $\mathcal V_n\oplus\mathcal H_n$ is orthogonal at each $n\in N$.
\end{definition}

Let $M=G/H$ be a semi-Riemannian symmetric space with $G$-invariant metric $g(.\,,.)$ corresponding to an $\Ad_H$-invariant scalar pro\-duct  $\langle.\,,.\rangle$ on $\fp$,  as  mentioned in property~\emph{2} before Definition~\ref{def:semi-RiemannianSubmersion}. We extend $\langle.\,,.\rangle$ to the entire Lie algebra $\fg$ such that the direct sum $\fg=\fp\oplus\fh$ becomes orthogonal. We denote by $L_qh=qh$ the multiplication from the left on $G$. We then define the vertical left invariant distribution $\mathcal V$ by $\mathcal V_q=d_eL_q(\fh)$ and the horizontal distribution $\mathcal H$ by $\mathcal H_q=d_eL_q(\fp)$. We keep the notation $\langle.\,,.\rangle$ for the left-invariant $\Ad_H$-invariant metric on $G$ induced by the extended scalar product on $\fg$. Under these conditions the projection map $\pi\colon G\to M$ is a semi-Riemannian submersion. 

We say that a vector field $X$ on $G$ is horizontal if $X(q)\in\mathcal H_q$ for any $q\in G$. An absolutely continuous curve $q\colon I\to G$ on $G$ is horizontal if $\dot q(t)\in \mathcal H_{q(t)}$ for almost every $t\in I$, or equivalently, if $ q^{-1}(t)\dot q(t)\in 
\mathcal H_{q(0)}$.

For a vector field $Y$ on $M$ there is a horizontal vector field $\widetilde Y$ on $G$ such that $d_q\pi (\widetilde Y(q))=Y_{\pi(q)}$. In particular, this implies that for any absolutely continuous curve $\alpha\colon I\to M$ there is a horizontal curve $q\colon I\to G$ such that $\pi(q(t))=\alpha(t)$ and $d_{q(t)}\pi (\dot q(t))=\dot\alpha(t)$ for almost every $t\in I$. We call $\widetilde Y$ and $q(t)$ the horizontal lifts of $Y$ and $\alpha(t)$, respectively. A horizontal lift $q(t)$ of a curve $\alpha (t)$ is unique, if we specify the initial value $q(0)$.

The following result  will be useful later on.

 \begin{lemma}\label{lem:CovDer}
 Let $\pi\colon G\to M$ be a semi-Riemannian submersion onto a semi-Riemannian symmetric space as above. Let $\alpha\colon [0,T]\to M$ be an absolutely continuous curve and $Y$ be a vector field along $\alpha$. Let $q\colon [0,T]\to G$ be a horizontal lift of $\alpha$ and $\widetilde Y$ a horizontal lift of $Y$ along $q$. Then 
\begin{equation}\label{eq:NN0}
 \frac{D^M_{\alpha(t)}}{dt}Y(t)=d_{q(t)}\pi\Big(\sum_{j=1}^k\frac{dy_j(t)}{dt} A_j\Big),
\end{equation}
where $d_{q(t)}L_{q^{-1}(t)}\widetilde Y(t)=\sum_{j=1}^ky_j(t)A_j\, $ is written in terms of a left-invariant basis $\{A_1,\ldots,A_k\}$ of $\fp$.
\end{lemma}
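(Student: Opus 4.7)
The plan is to push the calculation of $\frac{D^M}{dt}Y$ up to the group $G$ via the semi-Riemannian submersion $\pi$, exploit left-invariance to convert differential geometry into Lie algebra, and use the Cartan relation $[\fp,\fp]\subset\fh$ to see that only the coefficient derivatives $\dot y_j(t)$ survive after projecting down by $d\pi$.

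First I would write $\widetilde Y(t)=\sum_{j=1}^{k} y_j(t)\,\widetilde A_j(q(t))$, where $\widetilde A_j$ denotes the left-invariant extension to $G$ of the vector $A_j\in\fp$. Let $\nabla^G$ be the Levi-Civita connection of the left-invariant, $\Ad_H$-invariant metric on $G$. The product rule gives
$$
\nabla^G_{\dot q(t)}\widetilde Y(t)=\sum_{j=1}^{k}\dot y_j(t)\,\widetilde A_j(q(t))+\sum_{j=1}^{k} y_j(t)\,\nabla^G_{\dot q(t)}\widetilde A_j.
$$
The first sum is manifestly horizontal. The heart of the proof is to show that the second sum is vertical.

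For this, it suffices, by tensoriality in the first slot and left-invariance, to prove that $\nabla^G_{\widetilde X}\widetilde Y\rvert_e\in\fh$ for every $X,Y\in\fp$. Since inner products of left-invariant fields are constant, the Koszul formula collapses to
$$
2\langle\nabla^G_{\widetilde X}\widetilde Y,\widetilde Z\rangle=\langle[\widetilde X,\widetilde Y],\widetilde Z\rangle+\langle[\widetilde Z,\widetilde X],\widetilde Y\rangle+\langle[\widetilde Z,\widetilde Y],\widetilde X\rangle.
$$
Taking $Z\in\fp$, the symmetric space relation $[\fp,\fp]\subset\fh$ places all three brackets in $\fh$; as $\fh\perp\fp$ in the extended scalar product, every term on the right vanishes. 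Hence $\nabla^G_{\widetilde X}\widetilde Y\rvert_e$ is orthogonal to $\fp$, i.e., lies in $\fh$, and left-translating shows that $\nabla^G_{\dot q(t)}\widetilde A_j\in\mathcal V_{q(t)}$ whenever $\dot q(t)\in\mathcal H_{q(t)}$.

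Finally I would invoke the curve version of O'Neill's identity for semi-Riemannian submersions, namely
$$
\frac{D^M_{\alpha(t)}}{dt}Y(t)=d_{q(t)}\pi\bigl(\nabla^G_{\dot q(t)}\widetilde Y(t)\bigr),
$$
which one deduces by locally extending $Y$ to a vector field on $M$, taking its horizontal lift to $G$, and applying the standard relation $d\pi\bigl((\nabla^G_{\widetilde X}\widetilde Y)^{\mathcal H}\bigr)=\nabla^M_{d\pi \widetilde X}(d\pi \widetilde Y)$ together with $\mathcal V=\ker d\pi$. Combining this with the decomposition above gives precisely \eqref{eq:NN0}. The main obstacle is the algebraic step isolating the vertical component of $\nabla^G_{\widetilde X}\widetilde Y$: this is the unique place where the symmetric space hypothesis is essential, for in a merely reductive homogeneous space $[\fp,\fp]$ would have a nonzero $\fp$-component and \eqref{eq:NN0} would pick up an extra $\tfrac12 d_{q(t)}\pi[\widetilde X,\widetilde Y]_{\fp}$-type term.
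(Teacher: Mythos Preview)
Your argument is correct and follows essentially the same route as the paper: both use the Koszul formula for left-invariant fields, the relation $[\fp,\fp]\subset\fh$, and O'Neill's submersion identity $\nabla^M_XY=d\pi\bigl(\pr_{\mathcal H}\nabla^G_{\widetilde X}\widetilde Y\bigr)$ to see that only the $\dot y_j$-terms survive. The only cosmetic difference is that the paper computes the full formula $\nabla^G_VW=\tfrac12[V,W]$ for $V,W\in\fp$ (testing against both $\fh$ and $\fp$), whereas you test only against $\fp$ to conclude verticality directly---a slight economy that loses nothing for the purpose at hand.
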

\begin{proof}
We denote by $\nabla^G$ the Levi-Civita connection on $G$. First we show that 
\begin{equation}\label{eq:Nabla}
\nabla^G_VW=\frac{1}{2}[V,W]
\end{equation} 
for left invariant vector fields $V,W\in\fp$. Since $\fp$ and the metric on $G$ are $\Ad_{H}$-invariant, then
\begin{equation}\label{eq:ADHINV}
\langle [V,Z],W\rangle=\langle V,[Z,W]\rangle,\quad V,W\in\fp,\quad Z\in\fh,
\end{equation}
see, for instance~\cite[Lemma 3, Chapter 11]{ONeill}. Then for any $Z\in\fg$ and $V,W\in\fp$ we have 
\begin{equation}\label{eq:Koszul}
2\langle \nabla^G_VW,Z\rangle=-\langle V,[W,Z]\rangle+\langle W, [Z,V]\rangle+\langle Z,[V,W]\rangle
\end{equation}
by Koszul formula. If $Z\in\fh$, then the first two terms on the right hand side are cancelled by~\eqref{eq:ADHINV}. If $Z\in\fp$, then the first two terms on the right-hand side vanish by $[\fp,\fp]\in\fh$ and the orthogonality of $\fp$ and $\fh$. It shows~\eqref{eq:Nabla}.

 Let $\pi\colon G\to M$ be a Riemannian submersion, $X$, $Y$ vector fields on $M$, and $\widetilde X$, $\widetilde Y$ their horizontal lifts to $G$. We denote $\pr_{\mathcal H_q}\colon T_qG\to\mathcal H_q$ the orthogonal projection onto a horizontal sub-bundle $\mathcal H$ at $q\in G$. 
We recall that the Levi-Civita connections $\nabla^M$ on $M$ and $\nabla^G$ on $G$ are related by
\begin{equation}\label{eq:NN}
\nabla^M_XY=d_q\pi(\pr_{\mathcal H_q}\nabla^G_{\widetilde X}\widetilde Y),
\end{equation}
see~\cite[Lemma 45, Chapter 7]{ONeill}.
We write the horizontal lifts $\widetilde X$ and $\widetilde Y$, in the left invariant basis $\{A_1,\ldots,A_k\}$ of $\fp$ by
$$
d_{q(t)}L_{q^{-1}(t)} \widetilde X=\sum_{i=1}^kx_iA_i, \quad d_{q(t)}L_{q^{-1}(t)}\widetilde Y=\sum_{j=1}^ky_jA_j
$$
Then, 
\begin{eqnarray*}
\nabla^G_{\widetilde X} \widetilde Y&=&
\nabla^G_{\sum_{i=1}^kx_iA_i} \sum_{j=1}^ky_jA_j\, =\, 
\sum_{i,j=1}^kx_i\Big(\nabla^G_{A_i}y_jA_j\Big)
\\
&=&
\sum_{i,j=1}^kx_i\Big(A_i(y_j)A_j+y_j\nabla^G_{A_i}A_j\Big)
\\
&=&
\sum_{j=1}^k\Big(\sum_{i=1}^k(x_iA_i)(y_j)\Big)A_j+\sum_{i,j=1}^kx_iy_j\nabla^G_{A_i}A_j
\\
&=&
\sum_{j=1}^k\Big(\sum_{i=1}^k(x_iA_i)(y_j)\Big)A_j+\frac{1}{2}\sum_{i,j=1}^kx_iy_j[A_i,A_j].
\end{eqnarray*}
Since $M$ is a semi-Riemannian symmetric space we have $[A_i,A_j]\subset\fh$ and therefore
$$
\pr_{\mathcal H_q}\nabla^G_{\widetilde X}\widetilde Y=\sum_{j=1}^k\Big(\sum_{i=1}^k(x_i(q)A_i)(y_j(q))\Big)A_j .
$$
Now we set $d_{q(t)}L_{q^{-1}(t)}\widetilde X(t)=d_{q(t)}L_{q^{-1}(t)}\dot q(t)=\sum_{i=1}^kx_i(q(t))A_i$ and obtain
\begin{eqnarray*}
\pr_{\mathcal H_{q(t)}} \frac{D^G_{\dot q(t)}\widetilde Y(t)}{dt} 
&=&
\pr_{\mathcal H_{q(t)}} \nabla^G_{\dot q(t)} \widetilde Y(t)\, =\, 
\sum_{j=1}^k\Big(\dot q(t)(y_j(q(t)))\Big)A_j
\\
&=&
\sum_{j=1}^k\frac{dy_j(q(t))}{dt} A_j.
\end{eqnarray*}
We set $X(t)=\dot\alpha(t)$ in formula~\eqref{eq:NN} and obtain~\eqref{eq:NN0}.
\end{proof}


\subsection{Intrinsic rolling of symmetric spaces on flat manifolds}\label{RolSymSp}

The defi\-ni\-tion of a symmetric space is intimately related to the rolling on a flat space.  We aim to construct an intrinsic rolling of a semi-Riemannian symmetric manifold $M$ on the tangent space $T_oM=\M$. Namely, we will find the triplet $\big(\alpha(t),\widehat\alpha (t), A(t)\big)$ satisfying Definition~\ref{def_intrinsic} by using  only the data of the symmetric manifold. 

The main properties that result from assuming that $M$ is a symmetric space can be summarised in the following commutative diagrams, 

 \begin{equation}\label{eq:diag1}
\xymatrix
{
& G\ar[d]_-{\ds L_q} \ar[r]^-{\ds\pi}
& M\ar[d]^-{\ds\tau_q}
\\
 &G
 \ar[r]_-{\ds\pi}
 &M 
}\quad 
\xymatrix
{
& \fg= \fp\oplus \fh \ar[d]_-{\ds d_e(L_q)} \ar[r]^-{\ds d_e\pi}
& T_{o}M\ar[d]^-{\ds d_o(\tau_q)}
\\
&T_{q}G=\mathcal H_q\oplus\mathcal V_q 
 \ar[r]_-{\ds d_q\pi}
& T_{ \tau_q(o)}M
}
\end{equation}
Thus we conclude that
\begin{equation}\label{eq:pig}
\pi(q)=\pi(L_q(e))=\tau_{q}(\pi(e))=\tau_q(o),\quad q\in G.
\end{equation}

We also recall that, $\forall q\in G$,
$$\mathcal V_q=\ker(d_q\pi), \quad \fh=\ker(d_e\pi),
$$ 
and the map $d_q\pi\colon \mathcal H_q\to T_{\tau_q(o)}M$ is an isometry. Let $U\in\fp$. Then, from the second diagram in~\eqref{eq:diag1}, it follows that 
\begin{equation}\label{eq:dpig}
d_q\pi(d_eL_q(U))=d_o\tau_q(d_e\pi(U)).
\end{equation}

Now, choose an absolutely continuous curve $\alpha\colon [0,T]\to M$ such that $\alpha(0)=o$. Then, there exists  a horizontal curve $q\colon [0,T]\to G$ that projects to $\alpha$. More precisely
\begin{itemize}
\item[L1]{$\pi(q(t))=\alpha(t)=\tau_{q(t)}(o)$. In the last equality we used~\eqref{eq:pig};}
\item[L2]{$d_{q(t)}\pi(\dot q(t))=\dot\alpha(t)$;}
\item[L3]{$\dot q(t)=d_eL_{q(t)}U(t)$, for some curve $U\in\fp$.}
\end{itemize}
Combining $L3,L2$ and~\eqref{eq:dpig} we obtain
\begin{equation}\label{eq:dpig1}
\dot\alpha(t)\stackrel{L2}=d_{q(t)}\pi(\dot q(t))\stackrel{L3}=d_{q(t)}\pi(d_eL_{q(t)}U(t))\stackrel{\eqref{eq:dpig}}=d_0\tau_{q(t)}(d_e\pi(U(t))),
\end{equation}
and emphasize that both maps $d_{q(t)}\pi$ and $d_0\tau_{q(t)}$ are isometries between the corresponding spaces, for any $t\in [0,T]$. 

Now we define the curve  $\widehat\alpha (t)\in T_oM$. For the curve $U(t)=d_{q(t)}L_{q^{-1}(t)}(\dot q(t))$ on $\fp$, we write $d_e\pi(U(t))\in T_oM$,  and by solving the Cauchy problem
\begin{equation}\label{eq:CauchyPr}
\begin{cases}
\frac{d\widehat\alpha}{dt}=d_e\pi(U(t))=\dot{\widehat\alpha}
\\
\widehat\alpha(0)=0,
\end{cases}
\end{equation}
we obtain $\widehat\alpha (t)$
Here we implicitly identified the vector space $T_oM$ and $T_{\widehat\alpha(t)}(T_oM)$ by an isometric orientation preserving map $j$. 

We also define $A(t)\colon T_{\alpha(t)}M\to T_{\widehat\alpha(t)}(T_oM)$ as a composition of the maps
\begin{equation}\label{eq:isom}
T_{\alpha(t)}M\stackrel{L1}=T_{\tau_{q(t)}(o)}M\xrightarrow[\text{by}~\eqref{eq:diag1}]{ (d_0\tau_{q(t)})^{-1}} T_oM\xrightarrow[]{\,\,  j\,\, } T_{\widehat\alpha(t)}(T_oM).
\end{equation}
Note that, by the commutative diagram~\eqref{eq:diag1}, the map $A(t)$ can be defined alternatively  by the composition of the following isometric maps,
\begin{equation}\label{eq:isom1}
T_{\alpha(t)}M\xrightarrow[]{ (d_{q(t)}\pi)^{-1}} \mathcal H_{q(t)}\xrightarrow[]{ d_{q(t)}L_{q^{-1}(t)}}\fp\xrightarrow[]{\,  d_{e}\pi \,}T_oM\xrightarrow[]{\,  j\, } T_{\widehat\alpha(t)}(T_oM).
\end{equation}

%
%
%
%
%
%
%
%
\begin{proposition}\label{prop:rolling}
 If $\alpha (t)$, $\widehat\alpha (t) $ and $A (t)\colon T_{\alpha(t)}M\to T_{\widehat\alpha(t)}(T_oM)$ are  defined as in Section~\ref{RolSymSp}, the triple $(\alpha (t),\widehat\alpha (t), A(t))$ is a rolling curve for the intrinsic rolling of the manifold $M$ over $\widehat M=T_oM$, i.e., it satisfies conditions in  Definition \ref{def_intrinsic}.
\end{proposition}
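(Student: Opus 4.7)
The plan is to verify the three ingredients of Definition~\ref{def_intrinsic} for the triple $(\alpha(t),\widehat\alpha(t),A(t))$: that $A(t)$ is an oriented isometry, the no-slip identity $\dot{\widehat\alpha}(t)=A(t)\dot\alpha(t)$, and the parallel-transport equivalence. The first is almost tautological from the construction; the no-slip condition is a direct unwinding of the Cauchy problem~\eqref{eq:CauchyPr}; the parallel-transport equivalence is the substantive step and will rest on Lemma~\ref{lem:CovDer} together with the fact that $\widehat M=T_oM$ is flat.

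First, I would verify that the composition~\eqref{eq:isom1} is indeed an oriented isometry at every $t$. Each factor is either a linear isometry by construction: $d_{q(t)}\pi|_{\mathcal H_{q(t)}}\colon\mathcal H_{q(t)}\to T_{\alpha(t)}M$ is an isometry because $\pi$ is a semi-Riemannian submersion (Definition~\ref{def:semi-RiemannianSubmersion}); $d_{q(t)}L_{q^{-1}(t)}|_{\mathcal H_{q(t)}}\colon\mathcal H_{q(t)}\to\fp$ and $d_e\pi|_{\fp}\colon\fp\to T_oM$ are isometries by the left-invariance of the metric on $G$ and property~\emph{2} preceding Definition~\ref{def:semi-RiemannianSubmersion}; the map $j$ is an isometric orientation-preserving identification by construction. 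The orientation assertion is then immediate.

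Second, I would check the no-slip identity by applying $A(t)$ to $\dot\alpha(t)$ along the chain~\eqref{eq:isom1}. By property~L2, $(d_{q(t)}\pi)^{-1}\dot\alpha(t)=\dot q(t)\in\mathcal H_{q(t)}$. By property~L3, $d_{q(t)}L_{q^{-1}(t)}\dot q(t)=U(t)\in\fp$. By the very definition of $\widehat\alpha$ in~\eqref{eq:CauchyPr}, $d_e\pi(U(t))=\dot{\widehat\alpha}(t)$, and applying the canonical identification $j$ yields $A(t)\dot\alpha(t)=\dot{\widehat\alpha}(t)$.

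The main step is the parallel-transport equivalence, and here I would invoke Lemma~\ref{lem:CovDer}. Let $Y(t)$ be a tangent vector field along $\alpha(t)$, $\widetilde Y(t)$ its horizontal lift along $q(t)$, and write $d_{q(t)}L_{q^{-1}(t)}\widetilde Y(t)=\sum_{j=1}^k y_j(t)A_j$ in a basis $\{A_1,\dots,A_k\}$ of $\fp$. By Lemma~\ref{lem:CovDer} and the fact that $d_{q(t)}\pi\circ d_eL_{q(t)}\colon\fp\to T_{\alpha(t)}M$ is a linear isomorphism, $\frac{D^M}{dt}Y(t)=0$ is equivalent to $\dot y_j(t)=0$ for all $j$. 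On the other hand, tracing through~\eqref{eq:isom1} gives
\begin{equation*}
A(t)Y(t)=\sum_{j=1}^k y_j(t)\,B_j,\qquad B_j:=j(d_e\pi(A_j)),
\end{equation*}
where the vectors $B_j$ are independent of $t$ under the canonical identification of $T_{\widehat\alpha(t)}(T_oM)$ with $T_oM$. Since $\widehat M=T_oM$ with its flat structure has as parallel vector fields exactly the constant ones in this frame, $A(t)Y(t)$ is parallel along $\widehat\alpha(t)$ iff each $y_j(t)$ is constant, iff $\frac{D^M}{dt}Y(t)=0$. This yields the required equivalence and completes the verification of Definition~\ref{def_intrinsic}. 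The only delicate point is matching the coefficient description of $\frac{D^M}{dt}Y(t)$ coming from the submersion formula with the trivial expression for the flat connection on $T_oM$; once the same set of coefficients $\{y_j(t)\}$ is seen to govern both sides, the equivalence is immediate.
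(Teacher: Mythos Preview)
Your proof is correct and follows essentially the same route as the paper's: both hinge on Lemma~\ref{lem:CovDer} to express $\frac{D^M}{dt}Y(t)$ via the coefficients $y_j(t)$ in a basis of $\fp$, and then observe that the image $A(t)Y(t)$ in the flat space $T_oM$ has the same coefficients against a fixed frame, so parallelism on either side is equivalent to $\dot y_j\equiv 0$. Your write-up is in fact more complete than the paper's, which only spells out the forward implication (parallel $Y\Rightarrow$ parallel $\widehat Y$) and leaves the isometry and no-slip verifications implicit; you unpack both directions of the equivalence and check each factor in~\eqref{eq:isom1} explicitly.
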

\begin{proof}

By the construction of $A$, the condition~\eqref{iso} in the intrinsic rolling for the isometry $A(t)\colon T_{\alpha(t)}M\to T_{\widehat\alpha(t)}(T_oM)$  is fulfilled. We need to verify that a parallel vector field $Y$ along $\alpha$ is mapped to a parallel vector field $\widehat Y$ along $\widehat\alpha$. 

Let $Y$ be a vector field along $\alpha(t)=\tau_{q(t)}(o)$, where $q(t)$ is a horizontal lift of $\alpha(t)$. Then $\widehat Y=A(Y)$ is given by 
\begin{equation}\label{eq:VV}
\widehat Y(t)=d_e\pi\circ d_{q(t)}L_{q^{-1}(t)}(\widetilde Y(t))=d_e\pi\Big(\sum_{j=1}^k\widetilde y(t)A_j\Big),
\end{equation}
where we used~\eqref{eq:isom1}. We also denoted by $\widetilde Y(t)\in\mathcal H_{q(t)}$  the horizontal lift of $Y(t)$, and write $d_{q(t)}L_{q^{-1}(t)}\widetilde Y(t)=\sum_{j=1}^k\widetilde y(t)A_j$, where $\{A_1,\ldots,A_k\}$ is a basis for $\fp$. Assume that $Y(t)$ is a parallel vector field along $\alpha$. Then, using the identity \eqref{eq:NN0} in Lemma~\ref{lem:CovDer}, $d_{q(t)}\pi\Big(\sum_{j=1}^k\frac{d\widetilde y(t)}{dt}A_j\Big)=0$. Since $d_{q(t)}\pi\colon \mathcal H_{q(t)}\to T_{\alpha(t)}M$ is a bijection, we conclude that $\frac{d \widetilde y_j(t)}{dt}=0$ for all $j=1,\ldots,k$.  Then, \eqref{eq:VV} shows that 
$$
\frac{d\widehat Y(t)}{dt}=d_e\pi\Big(\sum_{j=1}^k\frac{d\widetilde y(t)}{dt}A_j\Big)=0.
$$
Thus $\widehat Y$ is a parallel vector field along $\widehat\alpha$ on $T_oM$.
\end{proof}



\subsection{Extrinsic rolling of symmetric spaces on flat manifolds}\label{RolSymSpExt}

In the present section we describe the rolling of a semi-Riemannian symmetric manifold $M$ on the  flat manifold which is the affine tangent space $\widehat M=T_{o}^{\text{aff}}M$ at $o\in M$. We will connect the intrinsic rolling, described in Section~\ref{RolSymSp} to the extrinsic rolling, by choosing an isometric embedding into a vector space $V$.  Let $M=G/H$ be a semi-Riemannian symmetric manifold.
Let  
\begin{equation}\label{eq:ext1}
\iota\colon M\to V,\quad \overline M=\iota(M)
\end{equation}
be an isometric embedding. In the present section all the objects related to $V$ will be marked by a line on top, like the image $\overline M=\iota(M)\subset V$ of $M$ in $V$, or $\overline o=\iota(o)$ the image of the isotropy point in $V$. The map $d_o\iota$ is a linear isometry and
\begin{equation}\label{eq:ext2}
d_o\iota(T_oM)=T_{\iota(o)}\overline M=T_{\overline o}\overline M\subset V.
\end{equation}
We define 
\begin{equation}\label{eq:ext3}
\overline{\widehat M}:=T^{\text{aff}}_{\overline o}\overline M=\overline o+T_{\overline o}\overline M.
\end{equation}
Note  that diagram~\eqref{eq:diag1} implies that any $\overline W\in T_{\overline o}\overline M$ can be written as
\begin{equation}\label{eq:ext4}
\overline W=d_o\iota(W)=d_o\iota(d_e\pi(U)),\quad \mbox{where} \quad W\in T_oM,\,\, U\in\fp.
\end{equation}
 We assume that $\rho\colon G\to \GL(V)$  is a representation of $G$ on $V$, and define
\begin{equation}\label{eq:ext5}
\overline G:=\rho(G).
\end{equation}

The action of $\overline G$ on $\overline M$ is denoted by $\overline q.\overline m$, with $\overline q=\rho(q)\in\overline G$ and $\overline m\in V$, to emphasize that the group $\overline G$ acts on both $\overline M$ and $\overline{\widehat M}$ as it does on vectors in $V$. We keep writing $\tau_q$ for the action of $q\in G$ on $M$. Moreover, we assume that the imbedding map $\iota$ is \emph{equivariant} under these actions, i.e.,
\begin{equation}\label{eq:ext5a}
\iota(\tau_q(m))=\overline q.(\iota(m))=\overline q.\overline m.
\end{equation}

We know that $G$ acts on the symmetric space $M$ by isometries and $\iota: M\rightarrow V$ is an isometric embedding. So, since the metric on $\overline M$ is the restriction of the metric on $V$,  $\overline G=\rho (G)$ must preserve the metric on $V$. As a consequence, $\overline G\subset \SO(V)$.

The group representation $\rho$ induces the Lie algebra representation $d_e\rho$ that maps $A\in \fg$ to $\bar A \in \bar\fg\subset\so(V)$. Let $(.\,, .)$ denote the scalar product on $\bar \fg$, defined by 
$$
(\bar A, \bar B):=\langle A,B \rangle.
$$
Then $\bar\fp =d_e\rho (\fp)$ is the orthogonal complement to $\bar\fh =d_e\rho (\fh)$ in $\bar\fg$ relative to $(.\,,.)$, and $\bar\fg =\bar\fh \oplus \bar\fp$ is a Cartan decomposition. 

Define the map $\mathbb{P}: \overline G \rightarrow \overline M$ by
\begin{equation}\label{eq:ext5b}
\mathbb{P}(\rho (q))=\iota(\pi (q)).
\end{equation}
The map $\mathbb{P}$ is smooth, as a composition of smooth maps.

 Differentiating~\eqref{eq:ext5a}, we get that the lower part of the following diagram commutes, while differentia\-ting~\eqref{eq:ext5b} we also get that the upper part of this diagram commutes.
\begin{equation}\label{eq:ext6}
\xymatrix
{
& \fp \ar[d]_-{\ds d_e\pi} \ar[r]^-{\ds d_e\rho}
& \overline\fp
\ar[d]^-{\ds p=d_e\mathbb P}
\\
&T_{o}M 
 \ar[d]_-{\ds d_o\tau_q}
 \ar[r]^-{\ds d_o\iota}
& T_{\overline o}\overline M
 \ar[d]^-{\ds d_{\overline o}\overline q}
\\
&T_{\tau_q(o)}M
\ar[r]^-{\ds d_{\tau_q(o)}\iota}
&T_{\overline q.\overline o}\overline M
}
\end{equation}
Here  all the linear maps are bijective isometries. 

We take an absolutely continuous curve $\alpha\colon [0,T]\to M$,  $\alpha(0) = o$, and its horizontal lift 
\begin{equation}\label{eq:HorizLift}
q\colon [0,T]\to G,\quad  d_{\dot q(t)}L_{q^{-1}(t)}\dot q(t)=U(t)\in\fp,
\end{equation}
such that $\alpha(t)=\tau_{q(t)}(o)$. This and the equivariance of $\iota$ given by~\eqref{eq:ext5a} imply that
\begin{equation}\label{eq:ext7}
\overline \alpha(t) =\iota(\alpha(t))=\iota(\tau_{q(t)}(o))=\overline q (t).\overline o ,
\end{equation}
where $\overline q\colon [0,T]\to \overline G$ is horizontal, i.e., $\bar q^{-1}\dot{\overline q}\in \bar\fp $.
But then,
\begin{eqnarray}\label{eq:ext8}
\dot{\overline \alpha}(t) &=&d_{\alpha(t)}\iota(\dot\alpha(t))=d_{\alpha(t)}\iota\circ d_o\tau _q(t)( d_e\pi (U))\nonumber
\\
&=&
d_{\overline o}\overline q (t)(p\circ  d_e\rho(U))=d_{\overline o}\overline q(t)(\overline W)
\end{eqnarray}
where $U\in\fp$ from~\eqref{eq:HorizLift}, and  $\overline W:=p\, \circ \, d_e\rho(U)\in T_{\overline o}\overline M$, by diagram~\eqref{eq:ext6}. Since $\overline q(t)$ is a linear map, then $d_{\overline o}\overline q(t)=\overline q(t)$ and we simply write $\overline W(t)=\overline q(t)^{-1}.\, \dot{\overline\alpha}(t)$ for $\overline q(t)$ from~\eqref{eq:ext7}. 

We are ready to define $\overline{\widehat \alpha}\in \overline{\widehat M}=T^{\text{aff}}_{\overline o}\overline M$. For that we find a curve $\overline s\colon [0,T]\to T_{\overline o}\overline M$ as the solution of the Cauchy problem
\begin{equation}\label{eq:CauchyPrExt}
\begin{cases}
\dot{\overline s}(t)=\overline W(t)=\overline q(t)^{-1}.\dot{\overline\alpha}(t)
\\
\overline s(0)=0 
\end{cases},
\end{equation}
and set $\overline{\widehat \alpha} (t)=\overline o+\overline s (t)\in T^{\text{aff}}_{\overline o}\overline M$. 
\begin{proposition}\label{prop:rollingExt}
In the notation of Section~\ref{RolSymSpExt}, there is $\overline R\colon [0,T]\to \SO(V)$ such that  $\overline g(t)=(\overline R(t), \overline s (t))\in \SE(V)$ is a rolling map that rolls the curve $\overline \alpha (t)\in\overline M$ onto the curve $\overline{\widehat \alpha} (t)\in \overline{\widehat M}=T^{\text{aff}}_{\overline o}\overline M$, 
where $\overline{\widehat \alpha} (t)=\overline o+\overline s (t)$.
\end{proposition}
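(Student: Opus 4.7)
My plan is to take the explicit candidate $\overline R(t):=\overline q(t)^{-1}$, with $\overline q(t)$ the horizontal lift from~\eqref{eq:HorizLift} normalized by $\overline q(0)=e$ so that $\overline R(0)=\Id_V$, set $\overline g(t):=(\overline R(t),\overline s(t))$ with $\overline s$ given by~\eqref{eq:CauchyPrExt}, and verify the six conditions of Definition~\ref{def_extrinsic} one at a time.

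The first four conditions are essentially delivered by the construction. The equivariance~\eqref{eq:ext7} yields $\overline g(t).\overline\alpha(t)=\overline q(t)^{-1}.\overline q(t).\overline o+\overline s(t)=\overline o+\overline s(t)=\overline{\widehat\alpha}(t)$, which is condition~1. Because $\overline q(t)\in\overline G\subset\SO(V)$ is an ambient isometry preserving $\overline M$, it also preserves the normal bundle, so $\overline R(t)(T_{\overline\alpha(t)}\overline M)=T_{\overline o}\overline M=T_{\overline{\widehat\alpha}(t)}\overline{\widehat M}$, which together with~\eqref{new2} is condition~2. The no-slip condition~3 is exactly~\eqref{eq:CauchyPrExt} read back through~\eqref{eq:ext8}. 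Condition~6 is immediate from $\overline R(0)=\Id_V$ and continuity of $\overline R(t)$.

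For the tangential no-twist condition~4, I would appeal to Proposition~\ref{prop-notwist}, which reduces the task to showing that $\overline R(t)$ sends tangent parallel fields along $\overline\alpha$ to tangent parallel fields along $\overline{\widehat\alpha}$. Via the commutative diagrams~\eqref{eq:diag1} and~\eqref{eq:ext6}, the restriction $\overline R(t)|_{T_{\overline\alpha(t)}\overline M}$ is precisely the transfer to the embedded picture of the intrinsic isometry $A(t)$ of Section~\ref{RolSymSp}, and Proposition~\ref{prop:rolling} has already established that $A(t)$ has the parallel-transport property, so condition~4 follows.

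The main obstacle is condition~5, the normal no-twist. Differentiating $\overline R(t)=\overline q(t)^{-1}$ along the horizontal lift gives $\dot{\overline R}(t)\overline R(t)^{-1}=-\overline U(t)\in\overline\fp\subset\so(V)$, so by Proposition~\ref{prop-sharpe} condition~5 reduces to the algebraic inclusion $\overline\fp.(T_{\overline o}\overline M)^\perp\subseteq T_{\overline o}\overline M$. This is exactly where the symmetric-space structure is used: the Cartan relation $[\fp,\fp]\subset\fh$ pushed across the representation $\rho$ forces the skew-symmetric action of $\overline\fp$ on $V$ to swap tangent and normal directions at $\overline o$, in complete analogy with $\ad(\fp)\fp\subset\fh$ and $\ad(\fp)\fh\subset\fp$ inside $\fg$. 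Should a pathological embedding fail this inclusion, I would instead invoke Sharpe's existence theorem quoted after Proposition~\ref{prop-sharpe}: a unique rolling map exists, its tangential restriction must agree with $A(t)$ by Proposition~\ref{prop4}, its translation part must agree with $\overline s(t)$ by no-slip together with the initial alignment $\overline{\widehat\alpha}(0)=\overline o$, and by Proposition~\ref{RRtil} its normal part differs from $\overline q(t)^{-1}$ only by an $\Omega_0$-perturbation supported on $(T_{\overline{\widehat\alpha}(t)}\overline{\widehat M})^\perp$.
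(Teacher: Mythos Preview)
Your verification of conditions 1--4 and 6 with the choice $\overline R(t)=\overline q(t)^{-1}$ is fine and is exactly how the paper handles the tangential part: it sets $d_{\overline\alpha(t)}\overline R(t)|_{T_{\overline\alpha(t)}\overline M}=\overline q(t)^{-1}$ and checks no-slip and tangential no-twist via the intrinsic rolling result (Proposition~\ref{prop:rolling}) and the commutative diagram~\eqref{eq:ext6}.

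The gap is in your treatment of condition~5. The assertion that ``$[\fp,\fp]\subset\fh$ pushed across the representation $\rho$ forces the skew-symmetric action of $\overline\fp$ on $V$ to swap tangent and normal directions at $\overline o$'' is not justified and is false in general. The Cartan relations govern the adjoint action of $\fp$ on $\fg$, not the action of $\overline\fp$ on the normal bundle of $\overline M$ in $V$; the normal space $(T_{\overline o}\overline M)^\perp$ has no intrinsic relation to $\fh$. A simple counterexample: replace $V$ by $V\oplus W$ for any nontrivial orthogonal $G$-module $W$ and embed $\overline M$ in $V\times\{0\}$; this is still isometric and equivariant, but now $\{0\}\times W\subset (T_{\overline o}\overline M)^\perp$ and $\overline\fp$ maps $\{0\}\times W$ into itself, not into $T_{\overline o}\overline M$. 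So $\overline R(t)=\overline q(t)^{-1}$ need not satisfy the normal no-twist condition. The paper itself flags this in the $\SO^+(p,n-p)$ example: ``For a general symmetric space this is not enough to define a rolling map \ldots\ However, for this particular example, it turns out that if $d_{\alpha(t)}R(t)=Q(t)^{-1}$, the normal no-twist condition is also satisfied.''

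Accordingly, the paper does \emph{not} take $\overline R(t)=\overline q(t)^{-1}$ globally. It only imposes~\eqref{eq:ext9new} on the tangent bundle and then \emph{defines} $\overline R(t)$ on $(T_{\overline\alpha(t)}\overline M)^\perp$ by the requirement that normal parallel fields along $\overline\alpha$ go to normal parallel fields along $\overline{\widehat\alpha}$ (condition~5$''$ of Proposition~\ref{prop-notwist}), which fixes $\overline R$ uniquely. Your fallback via Sharpe's existence theorem and Proposition~\ref{RRtil} amounts to the same thing and would complete the proof, but you should promote it from a contingency for ``pathological embeddings'' to the actual argument: the failure of $\overline\fp.(T_{\overline o}\overline M)^\perp\subset T_{\overline o}\overline M$ is the generic situation, not the exception.
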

We emphasise that for the rolling map $\overline g(t)$ in the statement of this proposition, one has the freedom to define $\overline R(t)\vert_{T^{\perp}\overline M}$ such that $g(t)$ satisfies the normal no-twist condition. The no-slip and tangent no-twist conditions are determined by the intrinsic rolling, the latter being the nature of symmetric spaces.

\begin{proof}
The proof is constructive. Since $\overline s$ is the solution of~\eqref{eq:CauchyPrExt}, it is enough to define $\overline R(t)\in\SO(V)$ so that $\overline g(t)=(\overline R(t), \overline s (t))\in \SE(V)$ satisfies the conditions in Definition~\ref{def_extrinsic}.

Let $\overline R (t)$ be such that
\begin{equation}\label{eq:ext9new}
d_{\overline\alpha(t)}\overline R (t)\vert_{T_{\overline\alpha(t)}\overline M}(\overline X (t))=\overline q(t)^{-1}.\, \overline X (t),
\end{equation}
 for any tangent vector field $\overline X (t)$ along $\overline\alpha(t)$ and $\overline R (0)=e$.
 
\noindent Using (\ref{eq:ext8}) and (\ref{eq:CauchyPrExt}), we see that $g(t)$ satisfies the no-slip condition
$$
d_{\overline\alpha(t)}\overline g (t) (\dot{\overline\alpha }(t))= \overline q(t)^{-1}.\, \dot{\overline\alpha }(t)=\dot{\overline s }(t)=\dot{\overline{\widehat\alpha }}(t).
$$

Now, we show that if $\overline R(t)$ is defined as above, then $\overline g(t)=(\overline R(t), \overline s (t))$  satisfies the tangent no-twist condition given in Proposition
\ref{prop-notwist}. Let $\overline X(t)$ be a tangent parallel vector field along $\overline{\alpha}(t)$. Notice that, from the bottom of diagram~\eqref{eq:ext6}, $d_{\alpha}\iota$  is a bijective isometry between 
$T_{\alpha}M$ and $T_{\overline \alpha}\overline M$. Since, according to~\cite[Proposition 3.59]{ONeill}, the covariant derivative on $M$ is the pullback of the covariant derivative on $\overline M$ under the isometries, 
the vector field $X(t)=\big(d_{\alpha(t)}\iota\big)^{-1}(\overline X(t))$ is  parallel along $\alpha (t)$ on $M$.

 Moreover, diagram~\eqref{eq:ext6} shows that the parallel vector field $X(t)$ is  mapped to the parallel tangent vector field $\overline{\widetilde X}(t)$ along $\overline s(t)$ on $T_{\overline o}\overline M$ due to Lemma~\ref{lem:CovDer} and the isometric embedding. As a consequence, the vector field $\overline{\widehat X}(t)$ along $\overline{\widehat \alpha}$ on $\overline{\widehat M}=T^{\text{aff}}_{\overline o}\overline M$ is parallel.


As was mentioned earlier on, the condition (\ref{eq:ext9new}) on $\overline R(t)$  still leaves freedom on how $d_{\overline\alpha(t)}\overline R (t)$ acts on the normal space $T_{\overline\alpha(t)}^{\perp}\overline M$. 
In order to guarantee that  $\overline g(t)=(\overline R(t), \overline s (t))\subset \SE(V)$ also satisfies the normal no-twist condition, we define the (unique) map $\overline R(t)$ along $\overline{\alpha}$ on $\overline M$ such that the differential $d_{\overline \alpha(t)}\overline R(t)\vert_{T^{\perp}_{\overline\alpha(t)}\overline M}$
 maps the normal parallel vector fields along $\overline{\alpha}$ to the normal parallel vector fields along $\overline{\widehat \alpha}$ on $\overline{\widehat M}=T^{\text{aff}}_{\overline o}\overline M$.
\end{proof}

\begin{remark}  In relation to the last part of the proof of Proposition \ref{prop:rollingExt}, we point out that in~\cite[Section 3.3]{MOL} a complete answer was given to the problem of extending intrinsic rollings to extrinsic ones. We also refer to \cite{KML} for non-twist conditions in the case of embedded sub-Euclidean manifolds.
\end{remark}

\begin{corollary} \label{cor-codim}
If   $\overline M$ has co-dimension $1$, let $\overline \alpha(t) =\overline q (t).\overline o$ be  a curve in $\overline M$, satisfying $\overline \alpha(0) =\overline o$, where $\overline q (t)$ is a horizontal curve in $\overline G$  and $\dot{\overline q}= \overline q \, .\, \overline U(t)$. Then,
$(\overline R(t),\overline s (t))\in \SE(V)$ is a rolling map of $\overline M$ on $\overline{\widehat M}$ along  $\overline \alpha(t)$, with development $\overline {\widehat\alpha}(t)=\overline s (t)+\overline o$, where $\overline R(t)=\overline q(t)^{-1}$  and $\overline s (t)$ satisfies the Cauchy problem $\dot{\overline s} (t)=\overline U(t).\overline o$, $\overline s(0)=0$. Moreover,
\be
\left\{\begin{array}{l}
\dot{\overline R} (t)=- \overline U(t).\overline R (t)\\
\dot{\overline s} (t)=\overline U(t).\overline o
\end{array}, \right.
\ee
are the corresponding kinematic equations. 
\end{corollary}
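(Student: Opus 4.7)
The plan is to recognize that in the codimension-one setting, Proposition~\ref{prop:rollingExt} can be made completely explicit by using the global linear action $\overline R(t):=\overline q(t)^{-1}\in\overline G\subset\SO(V)$ rather than constructing the rotation piece-wise on $T_{\overline\alpha(t)}\overline M$ and $T^{\perp}_{\overline\alpha(t)}\overline M$ separately. The reason this simple choice works is twofold: $\overline q(t)^{-1}$ is an isometry of $V$ that preserves $\overline M$, hence it also preserves the normal bundle; and by Remark~\ref{codim} the normal no-twist condition is automatic whenever $\overline M$ has codimension one, so the freedom left by Proposition~\ref{prop:rollingExt} on the normal part of $\overline R(t)$ imposes no obstruction.

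First I would check conditions 1 and 2 of Definition~\ref{def_extrinsic}. Using $\overline\alpha(t)=\overline q(t).\overline o$, direct computation gives
\[
\overline R(t).\overline\alpha(t)+\overline s(t)=\overline q(t)^{-1}.(\overline q(t).\overline o)+\overline s(t)=\overline o+\overline s(t)=\overline{\widehat\alpha}(t),
\]
confirming condition 1, while condition 2 follows because $\overline q(t)^{-1}$ is a linear isometry of $V$ preserving $\overline M$, whence $\overline q(t)^{-1}.T_{\overline\alpha(t)}\overline M=T_{\overline o}\overline M=T_{\overline{\widehat\alpha}(t)}\overline{\widehat M}$. Next, since $\overline R(t)$ acts linearly on $V$, the hypothesis~\eqref{eq:ext9new} of Proposition~\ref{prop:rollingExt} is fulfilled automatically, and that proposition then delivers the tangent no-twist condition and reduces the no-slip requirement to $\dot{\overline s}(t)=\overline q(t)^{-1}.\dot{\overline\alpha}(t)$; using $\dot{\overline\alpha}(t)=\overline q(t).(\overline U(t).\overline o)$ this becomes $\dot{\overline s}(t)=\overline U(t).\overline o$, matching the stated Cauchy problem. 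The normal no-twist condition is then free by Remark~\ref{codim}.

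Finally, the kinematic equations would follow by differentiating $\overline R(t)=\overline q(t)^{-1}$ via the identity $\tfrac{d}{dt}\overline q(t)^{-1}=-\overline q(t)^{-1}\,\dot{\overline q}(t)\,\overline q(t)^{-1}$ and substituting $\dot{\overline q}(t)=\overline q(t).\overline U(t)$, which yields $\dot{\overline R}(t)=-\overline U(t).\overline R(t)$; the companion equation $\dot{\overline s}(t)=\overline U(t).\overline o$ is the defining Cauchy equation. I do not foresee a substantive obstacle: the corollary is essentially a clean specialisation of Proposition~\ref{prop:rollingExt} in which the codimension-one hypothesis eliminates the only nontrivial choice---the action of $\overline R(t)$ on the normal line bundle---and the group-theoretic inverse $\overline q(t)^{-1}$ inside $\overline G$ is the obvious candidate that fulfils every requirement simultaneously.
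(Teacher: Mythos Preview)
Your proposal is correct and follows essentially the same approach as the paper: you invoke Remark~\ref{codim} to dispense with the normal no-twist condition in codimension one, take $\overline R(t)=\overline q(t)^{-1}$ as dictated by~\eqref{eq:ext9new}, and derive the kinematic equations by differentiating $\overline q(t)^{-1}$ and using $\dot{\overline\alpha}(t)=\dot{\overline q}(t).\overline o$. The paper's proof is slightly terser but the logic is identical.
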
 
\begin{proof}
For manifolds of co-dimension $1$, the normal non-twist condition is always satisfied. So, taking into account (\ref{eq:ext9new}) and Remark \ref{new_rem}, we have $\overline R (t)=\overline q(t)^{-1}$. So, $\dot{\overline R} (t)=- \overline U(t)\overline R (t)$. According to (\ref{eq:CauchyPrExt}), $\dot{\overline s} (t)=\overline q(t)^{-1}.\dot{\overline \alpha}(t)$. But 
$\dot{\overline \alpha}(t) =\dot{\overline q} (t).\overline o$, so it follows that
$\dot{\overline s} (t)=\overline q(t)^{-1}\dot{\overline q} (t).\overline o=\overline U(t).\overline o$.
\end{proof}


\subsection{Examples}


We will exemplify the results of Sections~\ref{RolSymSp} and~\ref{RolSymSpExt}. 


\subsubsection{Rolling the $2$-dimensional hyperbolic space.}
 

First we describe the hyperbolic disc as a symmetric manifold, and construct the intrinsic and extrinsic rolling on the corresponding flat spaces.
We refer to \cite{L} for more details about hyperbolic spaces and the relationship between two of its equivalent models, which will be used in this section.
  
 Let $\cD$ be the unit disk $\{z\in\IC:|z|<1\}$ in $\R ^2$, with the hyperbolic metric given in coordinates $(x_1,x_2)$ by $h^2=4\frac{(dx_1)^2+(dx_2)^2}{\left( 1-(x_1^2+x_2^2)\right) ^2}$. $\cD$ is also known as the Poincar\'e ball model. The Lie group 
 $$G=SU(1,1)=\left\{ g=\bpm a&b\\\bar b&\bar a\epm:\,\, a,b\in \IC,\,\, |a|^2-|b|^2=1\right\}$$ acts transitively on  $\cD$ via the M\"oebius transformations, i.e., 
\bean
\tau_g (z)=\frac{az+b}{\bar b z+\bar a}, \quad \tau_g(0)=\frac{b}{\bar a}.\ean  
Let  $H:=\left\{\bpm a&0\\0&\bar a\epm,|a|^2=1\right\}$ be the isotropy subgroup of $0\in \cD$.  The projection map is
\be \label{projmap}\begin{array}{lccc}
\pi:&G&\rightarrow & \cD=G/H \\
&g=\bpm a&b\\\bar b&\bar a\epm&\mapsto &\tau_g(0)=\frac{b}{\bar a}
\end{array}
\ee
The Lie algebra $\fg$ of $G$ is given by 
$$
\fg=su(1,1)=\left\{ \bpm iv&u_1+iu_2\\ u_1-iu_2& -iv\epm:\,\, v,u_1,u_2\in\IR\right\}.
$$
We endow $\fg$ with  an $ Ad_G$-invariant  semi-Riemannian metric  defined by $\langle X,Y\rangle =2\tr(XY)=\frac{1}{2}B(X,Y)$, \, $B(.\,,.)$  being the Killing form.
The matrices 
\be\label{pauli}
A_1=\frac{1}{2}\bpm i&0\\0&-i\epm, A_2=\frac{1}{2}\bpm 0&1\\1&0\epm, A_3=\frac{1}{2}\bpm 0&i\\-i&0\epm 
\ee form an orthonormal basis of $\fg$.

The Lie algebra $\fh$ of the isotropy subgroup $H$ is spanned by $A_1$ and its orthogonal complement $\fp$ is spanned by $A_2$ and $A_3$. Note that the restriction of $\langle .\,,.\rangle$ to $\fp$ is positive definite. From the commutation relations, we conclude that $\fg=\fh\oplus \fp$ is a Cartan decomposition of $\fg$.

A curve $z(t)$ in $\cD$ lifts to a horizontal curve 
$$
g(t)=\frac{1}{\sqrt{1-|z(t)|^2}}\bpm 1&z(t)\\\bar z(t)&1\epm e^{\theta (t) A_1}\in G,
$$ 
when $\dot\theta=\frac{-2}{1-|z|^2}(x_1\dot x_2-\dot x_1x_2)$. In such  case,
\be \label{g-1dotg}
g^{-1}\dot g =\frac{1}{1-|z|^2}\bpm 0&\dot z\, e^{-i\theta}\\\dot{\bar{ z}}\, e^{i\theta}&0 \epm . \ee
The proof of these two facts regarding lifts of curves can be found in~\cite[pages 97, 98]{Jc2}, modulo minor obvious missprints. 

\vspace*{0,15 cm}
\noindent $\bullet$ Intrinsic rolling of $\cD$ on $T_0\cD$.

We are now in conditions to apply the theory developed at the beginning of this section for the intrinsic rolling of $M=\cD$ on $\widehat M=T_0\cD$. Let $\alpha(t)$ be a curve in $\cD$ satisfying $\alpha(0)=0$. Define $u(t):=\displaystyle \frac{\dot \alpha(t)\, e^{-i\theta (t)}}{1-|\alpha(t)|^2}$, so that the horizontal lift of $\alpha$ to $G$  satisfies $g^{-1}\dot g = \bpm 0&u(t)\\
\overline u(t)&0 \epm =:U(t)$, $g(0)=I$. Notice that $\alpha (t)=\tau_{g(t)}(0)=\frac{b(t)}{\bar a(t)}$.
According to (\ref{eq:CauchyPr}) the curve $\widehat\alpha (t)$ is the solution of the initial value problem
$\dot{\widehat\alpha} (t)=d_e\pi (U (t))=u(t), \,\, \widehat\alpha (0)=0$. 

The isometry $A\colon T_{\alpha(t)}M\to T_{\hat \alpha(t)}\widehat M$ is obtained explicitly using~\eqref{eq:isom} and it is given by. 
$$
A(t) v(t)=\big(d_o\tau_{g(t)}\big)^{-1}=\overline a(t)^2v(t),\quad v(t)\in T_{\alpha(t)}M.
$$
\noindent $\bullet$   Extrinsic rolling of $\cD$ on $T^{\text{aff}}_0\cD$

The rollings of $\cD$ can be also represented "extrinsically" after  embedding $\cD$  in a vector space  and defining an appropriate representation of $G$. 
Here, we consider the embedding of $\cD$ in $V=\R^{1,2}$, which is $\R^3$ equipped  with the Minkowski metric $dm^2=-(dx_1)^2+(dx_2)^2+(dx_3)^2$. $V$ is isometric to $(su(1,1), \left< .,. \right>)$. 
The isometric diffeomorphism   
\be\label{Mink}
\begin{array}{lccl}
\iota:&\cD &\rightarrow &\R^{1,2}\\
&z=x_2+ix_3&\mapsto &\iota(z)=(\frac{1+|z|^2}{1-|z|^2},\frac{2x_3}{1-|z|^2},\frac{-2x_2}{1-|z|^2})
\end{array}\ee 
is obtained via the hyperbolic stereographic projection through the point $(-1,0,0)$ and an appropriate change of coordinates.
Then
\be \iota(\cD)=\cH^2=\{(x_1,x_2,x_3):\, x_1^2=1+x_2^2+x_3^2, \, x_1>0\}.\ee

Now  define $\overline G=\Ad_G$. It is known that~\cite{H,ONeill} that $\overline G \subset \SO(V)=\SO^+(1,2)$, that is the connected identity component of
$$\SO(1,2)=\{X\in \SL(3, \R : X^T I_{1,2}X=I_{1,2}\},\,\,   \,\,I_{1,2}= \bpm -1&0&0\\
0&1&0\\
0&0&1\epm.$$
Indeed, calculating $gA_jg^{-1}$, $j=1,2,3$, with $g=\bpm a&b\\\bar b&\bar a\epm, |a|^2-|b|^2=1$, we obtain
\be
Ad_g=\left[\begin{array}{c|cc} |a|^2+|b|^2&2Im(\bar a b)&-2Re(a\bar b)\\\hline
2Im(ab)&Re(a^2-b^2)&-Im(a^2+b^2)\\
-2Re(ab)&Im(a^2-b^2)&Re(a^2+b^2)\end{array} \right]. 
\ee
It can be shown that  
$\Ad_g\, I_{1,2}\, \Ad_g=I_{1,2}$, for all $g\in G$, and the determinant of the diagonal blocks is in both cases equal to $|a|^2+|b|^2>0$, so $\Ad_g\in \SO^+(1,2)$.
It follows that $\dot g(0)\mapsto ad_{\dot g(0)}$ defines  is a Lie algebra isomorphism $d_e\rho$, between $\fg =su(1,1)$ and $\bar \fg =\fso(1,2)$. Since $[A_1,A_2]=A_3$, $[A_1,A_3]=-A_2$, and $[A_2,A_3]=-A_1$, an easy calculation  yields 
 \be
 A=\frac{1}{2}\bpm iv&u\\\bar u&-iv\epm \mapsto ad_A=\bpm 0&u_2&-u_1\\u_2&0&-v\\-u_1&v&0\epm, u=u_1+i u_2.\ee
We also have the Cartan decomposition $\fso(1,2)= \overline \fh \oplus \overline \fp$, 
 where 
 $$
\overline \fh =\spn \left\{ \bpm 0&0&0\\0&0&1\\0&-1&0\epm \right\}, \quad \overline \fp =\spn \left\{ \bpm 0&0&1\\0&0&0\\1&0&0\epm ,  \bpm 0&1&0\\1&0&0\\0&0&0\epm \right\},$$
$\overline\fh$ is the Lie algebra of the isotropy subgroup of $\SO(1,2)$ at $e_1$.

We also need to guarantee that the embedding $\iota$ is equivariant rela\-tive to $\overline G$, i.e., 
$\iota(\tau_g(z))=\Ad_g(\iota(z))$, for every $z\in\cD$ and  $g\in G$. We first show that this identity is true for $z=0$, and then use the transitive action of $G$ on $\cD$ to prove the general case. 
\be \label{equiv}
\begin{array}{ll}
 \iota(\tau_g(0))&=\iota (\frac{b}{\bar a})= 
\iota (\frac{1}{|a|^2}Re(ab),\frac{1}{|a|^2}Im(ab))
\\
&
\\
&
=\left ( |a|^2+|b|^2,2Im(ab), -2Re(ab)\right )\\
&\\
&=\Ad_g(e_1)=\Ad_g(\iota(0)).
\end{array}
\ee
Now, let $h:=\frac{1}{\sqrt{1-|z|^2}}\bpm 1&z\\\bar z&1\epm \in \SU(1,1)$, so that $z=\tau_h(0)$. Using this and the identity (\ref{equiv}), we can write, for each $g\in G$ and  $z\in \cD$,
\be \label{equiv1}
\begin{array}{ll}
 \iota(\tau_g(z))&=\iota(\tau_g(\tau_h(0)))= \iota(\tau_{gh}(0))= \Ad_{gh}(\iota (0))\\
&\\
&=\Ad_g(\Ad_h(\iota (0)))=\Ad_g(\iota(\tau_h(0))=\Ad_g(\iota(z)).

\end{array}
\ee

We are finally in conditions to deal with the extrinsic rolling of the hyperboloid $\cH$ on its affine tangent space at $e_1$, resulting from the action of $\SO^+(1,2)$. Since $\cH^2$ is co-dimension $1$, Corollary~\ref{cor-codim} applies 
and $(\overline R(t), \overline s(t))$ is a rolling map along the curve $\overline \alpha (t)=\overline g (t) e_1$. The kinematic equations for the extrinsic rolling  of $\cH^2$ on $T^{\text{aff}}_{e_1}\cH$ are,
\be  
\begin{array}{ll}
\left\{\begin{array}{l}
\dot{\overline R} (t)=-\overline U(t)\overline R (t)\\
\\
\dot{\overline s} (t)=\overline U(t)e_1
\end{array}\right. , &\quad \overline U=\bpm 0&u_1&u_2\\u_1&0&0\\u_2&0&0\epm
\end{array}.
\ee
 This agrees with the results reported in~\cite{JZ}.

\subsubsection {The projective complex plane and the Riemann sphere.}

 This is another example where the natural geometry on $M$ is induced by the structure of $G$.  The rollings of the projective space $\IC\cP^1$, identified with the extended complex plane $\comp  \cup \infty$,  on its tangent planes can be obtained  essentially in the same way as in the case of the Poincar\'e disk, with obvious adaptations. For this reason we omit certain details here. 
 
Consider the projective plane $M=\IC\cP^1$ with the elliptic metric given in coordinates $(x_1,x_2)$ by $l^2=4\frac{(dx_1)^2+(dx_2)^2}{\left( 1+(x_1^2+x_2^2)\right) ^2}$. The Lie group $G=\SU(2)$ acts transitively on $M$. The isotropy group of the origin $z=0$ is $H=\{\bpm a&0\\0&\bar a\epm,|a|=1\}$. We endow $\fg=\fsu(2)$ with  the metric   $\langle X,Y\rangle =-2\tr(XY)$.
 Relative to this metric, the matrices
\be\label{pauli2}
A_1=\frac{1}{2}\bpm i&0\\0&-i\epm, A_2=\frac{1}{2}\bpm 0&1\\-1&0\epm, A_3=\frac{1}{2}\bpm 0&i\\i&0\epm 
\ee form an orthonormal basis of $\fg$. The Lie algebra $\fh$ and the complementary space $\fp$ are given by
$$\fh=\{\frac{1}{2}\bpm iv&0\\0&-iv\epm:v\in \mathbb R\}, \quad \fp =\{\frac{1}{2}\bpm 0&u\\-\bar u&0\epm, u\in\IC\}. $$
The horizontal lift of a curve $\alpha(t)=x_1+ix_2$ in $\IC\cP^1$ to  $\SU(2)$ is given by $g(t)=\frac{1}{\sqrt{1+|\alpha(t)|^2}}\bpm 1&\alpha(t)\\-\bar \alpha(t)&1\epm e^{\theta (t) A_1}$, with $\theta $ being a solution of  $\dot\theta=\frac{2}{1+|\alpha|^2}( x_1 \dot x_2-\dot x_1x_2)$.

\noindent $\bullet$ Intrinsic rolling of $\IC\cP^1$ on $T_0\IC\cP^1$

We are ready  to deal with the intrinsic rolling of $M=\IC\cP^1$ on its tangent space  at $z=0$.  Let $\alpha(t)$ be a curve in $\IC\cP^1$ satisfying $\alpha(0)=0$, and define $u(t):=\displaystyle \frac{\dot \alpha(t)\, e^{-i\theta (t)}}{1+|\alpha(t)|^2}$, so that the horizontal lift $g(t)\in \SU(2)$ of $\alpha$ satisfies $g^{-1}\dot g = \bpm 0&u(t)\\
-\overline u(t)&0 \epm =:U(t) \in \fp$, $g(0)=I$. 
So, according to Proposition \ref{prop:rolling},   the curve $\alpha(t)$ in $\IC\cP^1$ rolls on the curve $\widehat\alpha (t)$ in  $T_0\IC\cP^1$ which is the solution of $\dot{\widehat\alpha} (t)=u(t)$, $\widehat\alpha (0)=0$.

The isometry (that preserves the elliptic metric)
 is given explicitly by $$
\begin{array}{lccc}
A=(d_0\tau_{g(t)})^{-1}: &T_{\alpha (t)}M &\rightarrow &T_{\widehat\alpha (t)}\widehat M\\
&v(t)&\mapsto & v(t)\overline a(t)^2
\end{array}.$$
So,  $(\alpha,\widehat\alpha, A)$ is a rolling curve for the intrinsic rolling of $\IC\cP^1$ on its tangent space at $0$.


\noindent $\bullet$ Extrinsic rolling of $\IC\cP^1$ on $T^{\text{aff}}_0\IC\cP^1$

 For the extrinsic rolling, we embed  $\IC\cP^1$ in the $3$-dimensional Euclidean space, through the passage to the Riemann sphere $S^2$ via the inverse of the stereographic projection and a change of coordinates. This isometric embedding is defined by
 \be\label{Stereo}
\begin{array}{lccl}
\iota :&\IC\cP^1 &\rightarrow &\R^3\\
&z=x+iy&\mapsto &\iota(z)=\displaystyle (\frac{-2x}{1+|z|^2},\frac{|z|^2-1}{1+|z|^2},\frac{-2y}{1+|z|^2})
 \end{array}.\ee 
Clearly $\iota (\IC\cP^1 )=S^2$, and  $\infty$ is mapped to the north pole of $S^2$.

\noindent In this case 
\be
\Ad_g=\left[\begin{array}{ccc} |a|^2-|b|^2&-2Im(\bar a b)&2Re(\bar a b)\\
2Im(ab)&Re(a^2+b^2)&-Im(a^2-b^2)\\
-2Re(ab)&Im(a^2+b^2)&Re(a^2-b^2)\end{array} \right] \in \SO(3), 
\ee
so, we define $\rho(\SU(2))=\overline G=\Ad_G =\SO(3).$
The Lie algebra isomorphism $d_e\rho\colon \fsu(2)\to \fso(3,\R)$ is defined by
 \be
 A=\frac{1}{2}\bpm iu_1&u_2+iu_3\\-u_2+iu_3&-iu_1\epm \mapsto ad_A=\bpm 0&-u_3&u_2\\u_3&0&-u_1\\-u_2&u_1&0\epm.\ee
Clearly, $\overline\fp =span \left\{ \bpm 0&0&1\\0&0&0\\-1&0&0\epm , \bpm 0&0&0\\0&0&1\\0&-1&0\epm \right\}$.
 Since the embedding  defined in \eqref{Stereo}
is equivariant relative to the adjoint group $\SO(3)$,
 we can finally apply Corollary \ref{cor-codim} to obtain  the  extrinsic rolling of the Riemann sphere on its affine tangent space at the south pole $-e_3$, along the curve $\overline \alpha (t)=\overline g (t) (-e_3)$, where  $\overline g (t)$ is horizontal. Assume that 
$$\overline g (t)^{-1}\dot{\overline g }(t)=\overline U (t)=\bpm 0&0&u_1(t)\\0&0&u_2(t)\\-u_1(t)&-u_2(t)&0\epm
$$
Then, the kinematic  equations are:
\be  
\left\{\begin{array}{lcl}
\dot{\overline R} (t)&=&-\overline U(t)\overline R (t)\\
&&\\
\dot{\overline s} (t)&=&-\overline U(t)\, e_3
\end{array}\right. , 
\ee
 with $\overline U $ as above. These equations are the same  as the equations for the ball-plate problem \cite{J}, or  the equations for the sphere rolling on a plane~\cite {HL,Jc1}.

\subsubsection{Rolling semi-Riemannian orthogonal groups}

Here we consider $M$ to be the connected component containing the identity of the semi-Riemannian orthogonal group $O(p,n-p)$, $ 1\leq p\leq n-1$, consis\-ting of invertible $n\times n$ real matrices $P$, satisfying $P^JP=I_n$, where $J=\diag(I_{p}, -I_{n-p})$, and $P^J:=J^TP^TJ$. The Lie algebra of $O(p,n-p)$, denoted by $\so(p,n-p)$, consists of  $n\times n$ matrices $B$ satisfying $B^J=-B$. If we consider $P\in O(p,n-p)$ partitioned as 
$
 P= \left[
\begin{array}{c|c}
                     P_1&  P_2 \\\hline
                   P_3  & P_4
                   \end{array}
\right],  \mbox{ where $ P_1$ is $p\times p$},  
$ then
 {\small  \begin{equation}\label{SRM}
  \ba{lcl}
  M&=&\SO^+(p, n-p)\\
  &=&\left\{ P\in O(p,n-p):  \det(P)=1, \det(P_1)>0, \det(P_2)>0 \right\} .
  \ea
 \end{equation}
} 
We consider $M$ 
equipped with the semi-Riemannian metric defined by 
\begin{equation}\label{SR-metric}\left< B,C\right>  _J:=\tr{(B^JC)}.
\end{equation}

Consider the Lie group $G:=\SO^+(p, n-p)\times \SO^+(p, n-p)$, equipped with the natural semi-Riemannian metric induced by  (\ref{SR-metric}) on each component, which is bi-invariant. $G$ acts transitively on $M$ with action
\begin{equation}\label{SR-action}
\ba{cccc}\tau : &G\times M&\rightarrow &M\\
&((Q_1,Q_2),P)&\mapsto &Q_1PQ_2^{-1}
\ea.
\end{equation}

Fixing a point $P_0\in M$, the projection $\pi:G \rightarrow M$ maps $(Q_1,Q_2)$ to $Q_1P_0Q_2^{-1}$.
The isotropy subgroup at $P_0$ is
\begin{equation}\label{SR-isotropy}
H=\left\{ (Q_1, Q_2)\in G:  Q_1P_0Q_2^{-1}=P_0 \right\},  
\end{equation}
and $M=G/H$. Of course, the semi-Riemannian metric (\ref{SR-metric}) on $M$ is also $Ad_H$-invariant. The Lie algebra $\fg= \so(p,n-p)\oplus \so(p,n-p)$ splits as
$\fg= \fh \oplus \fp$, where
\be\label{SR-CD}
\ba{l}
\fh = \left\{ (B, P_0^{-1}BP_0):  B\in \so(p,n-p) \right\} \\
\\
\fp =\left\{ (C, -P_0^{-1}CP_0):  C\in \so(p,n-p) \right\} 
\ea , 
\ee
and this orthogonal splitting satisfies (\ref{eq:Cartan}).

 \noindent $\bullet$  Intrinsic rolling of $M=\SO^+(p, n-p)$ on $\M=T_{P_0}\SO^+(p, n-p)$.
 
We now apply the results obtained in Section \ref{RolSymSp} for the intrinsic rolling of $M=\SO^+(p, n-p)$ on its tangent space at the point $P_0$. Note that  the differential of $\pi$ at $(e,e)$, the identity in $G$,
is given by
\begin{equation}\label{1000}
\ba{cccc}d_{(e,e)}\pi : &\fg&\rightarrow &T_{P_0}M\\
&(U_1,U_2)&\mapsto &U_1P_0-P_0U_2
\ea, 
\end{equation}
and the kernel of $d_{(e,e)}\pi$ is $\fh$. So, $d_{(e,e)}\pi$ defines an isomorphism between $\fp$ and  $T_{P_0}M$, mapping $(U,-P_0^{-1}UP_0)$ to $2UP_0$.

Let $\alpha (t)$ be a curve in $M$ satisfying $\alpha(0)=P_0$, and $Q(t)$ a horizontal lift of $\alpha (t)$ to $G$, i.e., $\pi (Q(t))=\alpha (t)$ and $Q^{-1}\dot{Q}= (U(t), -P_0^{-1}U(t)P_0)$, for some curve $U(t)\in \so(p,n-p)$. Then, according to (\ref{eq:CauchyPr}), Section \ref{RolSymSp}, the curve $\alpha(t)\in M$ rolls on the curve $\widehat\alpha(t)\in \M$ defined by
$$
\dot{\widehat\alpha}(t)=2U(t)P_0, \qquad \widehat\alpha(0)=0, 
$$
and the isometry $A(t)$ is defined in  (\ref{eq:isom}) as the inverse of $d_{P_0}\tau_{Q(t)}$. Since for $Q=(Q_1,Q_2)$,
\begin{equation*}
\ba{cccl}d_{P_0}\tau_{Q} : &T_{P_0}M&\rightarrow &T_{Q_1P_0Q_2^{-1}}M\\
&CP_0&\mapsto &Q_1CP_0Q_2^{-1}= Q_1CQ_1^{-1}Q_1P_0Q_2^{-1}
\ea, 
\end{equation*} 
where $C\in \so(p,n-p)$, with the identification of the vector spaces $T_{P_0}M$ and $T_{\widehat\alpha (t)}M(T_{P_0}M)$, we finally obtain
\begin{equation*}
\ba{cccl}A(t) : &T_{\alpha (t)}M&\rightarrow &T_{\widehat\alpha (t)}\M\\
&DQ_1P_0Q_2^{-1}&\mapsto &Q_2^{-1}DQ_2P_0, \quad D\in \so(p,n-p)
\ea. 
\end{equation*}
In conclusion, the triple $(\alpha(t), \widehat\alpha (t), A(t))$ is a rolling curve in the sense of Definition \ref{def_intrinsic}.\\

  \noindent $\bullet$ Extrinsic rolling of $M=\SO^+(p, n-p)$ on $\M=T^{\text{aff}}_{P_0}\SO^+(p, n-p)$.
We isometrically embed $M$ and $\M$ on the semi-Euclidean vector space 
$ 
V=\left( \fgl(p,n-p), \left< .\,,.\right>  _J \right)$, and identify $\iota (M)$ and $\iota (\M)$ with $M$ and $\M$ respectively. In this case, also the representation $\rho$ of $G$ on $V$ is the identity map, so we can write everything in Proposition \ref{prop:rollingExt} without using overlines. The equivariance property (\ref{eq:ext5a}) is also trivially satisfied,  and the action of $G$ on $V$ is simply the extension of the action (\ref{SR-action}) from $M$ to the embedding space $V$. Notice that 
\be \label{TN-SM}\ba{c}
T_{P_0}\SO^+(p, n-p)=\{ BP_0,\,\, B^J=-B\} , \\
\\
T^{\perp}_{P_0}\SO^+(p, n-p)=\{ CP_0,\,\, C^J=C\}.
\ea
\ee

 We now find the rolling map $g(t)=(R(t),s(t))\in SE(V)$ along the curve $\alpha(t)=Q_1(t)P_0Q_2^{-1}(t)$ in $M$, satisfying $\alpha(0)=P_0$ where $Q(t)=(Q_1(t),Q_2(t))$ is a horizontal lift of $\alpha(t)$ to $G$. So,
 $Q^{-1}\dot{Q}\in \fp$, that is, $Q^{-1}\dot{Q}=(Q_1^{-1}\dot{Q_1}, -P_O^{-1} Q_1^{-1}\dot{Q_1} P_0)$. Defining 
 $U:=Q_1^{-1}\dot{Q_1}$, we have $Q^{-1}\dot{Q}=(U, -P_O^{-1} U P_0)$, and after a few simple calculations, we get
$\dot\alpha (t)=Q_1(t)(2U(t)P_0)Q_2^{-1}(t)$. So, according to Proposition \ref{prop:rollingExt}, $s(t)$ is the only solution of 
$$
\dot{s}(t)=2U(t)P_0, \qquad s(0)=0.
$$
We also know that, for every tangent vector field $X(t)$ along $\alpha (t)$ \begin{equation*}
d_{\alpha(t)} R (t)\vert_{T_{\alpha(t)} M}(X (t))=Q(t)^{-1}\,  X (t),
\end{equation*}
and the tangent no-twist condition is satisfied. 
For a general symmetric space this is not enough to define a rolling map in the sense of Definition  \ref{def_extrinsic}, because the normal no-twist condition also requires that we know how to define 
$d_{\alpha(t)} R (t)\vert_{T^{\perp}_{\alpha(t)} M}$. However, for this particular example, it turns out that if $d_{\alpha(t)} R (t)=Q(t)^{-1}$,  the normal no-twist condition is also satisfied.

To show this we rewrite the normal no-twist condition  5 of Definition \ref{def_intrinsic} in its equivalent form given in 5' of Proposition \ref{prop-sharpe}.
Taking into consideration that $$T_{\widehat\alpha (t)}\M=T_{P_0}\SO^+(p, n-p) \,\,  \mbox{ and } \, \, T^{\perp}_{\widehat\alpha (t)}\M=T^{\perp}_{P_0}\SO^+(p, n-p),$$ 
and using (\ref{TN-SM}), the normal no-twist condition is equivalent to prove that for every $B=-B^J$, 
$(R^{-1}\dot{R})(BP_0)$ is always of the form $CP_0$, for some matrix $C$ satisfying $C=C^J$. 
 But $R^{-1}\dot{R}=(U, -P_O^{-1} U P_0)$ and  $U^J=-U$, so 
 $$(R^{-1}\dot{R}).(BP_0)=UBP_0+BP_0P_O^{-1} U P_0= (UB+BU)P_0=CP_0$$ where $C=UB+BU=(UB+BU)^J=C^J$. 
 Writing $R=(R_1,R_2)$, the kinematic equations are:
 \be
 \left\{
 \ba{lcl}
 \dot{s}(t)&=&2U(t)P_0\\
 \dot{R_1}(t)&=&-U(t)R_1 \\
 \dot{R_2}(t)&=&P_O^{-1} U(t) P_0R_2(t)
  \ea
 \right.,
 \ee
with initial conditions $s(0)=0,\, R(0)=(e,e)$. This coincides with the results in \cite{CL2012}.


   \section{Rolling of Stiefel manifolds on the affine tangent space} \label{RollingStiefel}
   
 
We will now narrow our discussion to  the Stiefel manifolds $\St_{nk}$ equipped with the  Riemannian metric inherited from the ambient Euclidean vector space $\mathcal M_{nk}$ consisting of $n\times k$  matrices. Rolling motions of Stiefel manifolds were already studied in \cite{HKL}, but here we present an alternative approach which is coordinates free, and also a different representation of Stiefel as a homogeneous space is used.
     
 There are two compelling  reasons for including the Stiefels in  this paper.  Firstly, because   it is the only case  outside of the $G$-invariant Riemannian manifolds where the rolling equations are explicitly calculated, and  secondly because it  illustrates the relevance  of  the normal no-twist condition for rolling manifolds that are homogeneous spaces but not symmetric spaces.


\subsection{Stiefel manifold}


Let $\R^n$ be the Euclidean vector space with its standard scalar product. We denote by $\fgl(n)$ the vector space of all real $n\times n$ matrices endowed with the positive definite scalar product $\langle N,M\rangle=\tr(N^TM)$. We induce this metric on the subspace $V=\mathcal M_{nk}\subset \fgl(n)$ of $n\times k$  matrices. We also consider the groups $\GL(n)$ and $\SO(n)$ as submanifolds of $\fgl(n)$. We define the action of $\fgl(n)$ on $V$ through the linear isometric homomorphism 
$$
\begin{array}{ccccll}
\rho&\colon &\fgl(n)&\to&\fgl(V)
\\ 
&&A&\mapsto &
\rho_A\end{array}
$$
with $\rho_A(M)=AM$, $M\in V$. Under this convention, we obtain $\frac{d}{dt}\rho_{A(t)}=\rho_{\dot{A}(t)}$ for any smooth enought curve $A(t)$ in $\fgl(n)$. Moreover, 
the restriction of $\rho$ on the group $\SO(n)$ is a group homomorphism $\rho\colon \SO(n)\to\SO(V)$ meaning that $\rho_{Q^{-1}}=(\rho_Q)^{-1}$ and $\rho_{Q_1Q_2}=\rho_{Q_1}\circ\rho_{Q_2}$.

The Stiefel manifold $\St_{nk}$ consists of ordered sets of $k$-orthonormal vectors in  $\R^n$. Any ordered set $m_1,\dots,m_k$ of orthonormal vectors can be identified with a matrix $M$  whose  columns  are $m_1,\dots,m_k$. Any such matrix  $M$ satisfies  $M^TM=I_k$, where $I_k$ is the $k$-dimensional identity matrix, and $M^T$ is the matrix transpose of $M$. This matrix representation realizes $\St_{nk}$ as an isometrically embedded compact submanifold of the  Euclidean vector space $V=\mathcal M_{nk}$, which we continue to denote by $\St_{nk}$. 

The Stiefel manifold can be also viewed as a homogeneous space. In what follows   $\{e_1,\dots,e_n\}$  denotes the standard basis in $\R^n$ and  $E$  denotes the matrix with  columns $e_1,\dots,e_k$. The group $\SO(n)$ acts transitively on $\St_{nk}$. Thus $\St_{nk}$ can be identified with the orbit $\{\rho_Q(E):Q\in \SO(n)\}$. The   isotropy subgroup $H=\{Q\in \SO(n): \rho_Q(E)=E\} $  reduces to matrices  $Q=\bpm I_k&0\\0&X\epm$, with $X\in \SO(n-k)$.  Evidently $H$ is isomorphic to $\SO(n-k)$ and consequently $\St_{nk}=\SO(n)/\SO(n-k)$.
It follows that  $\fso(n)=\fp\oplus\fh$, where 
\be \label{structure}
 \begin{array}{cc}\fh=\left\{\bpm 0&0\\0&C\epm, \, C\in \fso(n-k)\right\}, \\&\\ \fp=\left\{\bpm A&-B^T\\B&0\epm, \, A\in \fso(k),\, B\in \mathcal M_{(n-k)k}\right\}.\end{array}
 \ee 
 One can easily verify that $\fh$ is the Lie algebra of $H$, $\fp$  is    the   orthogonal complement to $\fh$ relative to the trace metric $\langle M_1,M_2\rangle=-\tr(M_1M_2)$, and  
 \be 
 [\fh,\fh]\subset \fh,\quad [\fp,\fh]=\fp,\quad \fh\subset[\fp,\fp].\ee 
The latter algebraic properties show that the Stiefel manifold $\St_{nk}$ is not a symmetric space. It does not allow to use the general aproach for the construction of the rolling, which we developed for symmetric spaces in Section~\ref{RolSymSp}. 

The projection map $\pi\colon \SO(n)\to \St_{nk}=\SO(n)/\SO(n-k)$ is given by $\pi(Q)=QE=\rho_Q(E)$. It is a submersion, and $d_{e}\pi\colon \fp \rightarrow T_ESt_{nk}$ is an isomorphism, mapping $U\in \fp$ to $UE$.  
The group homomorphism $\rho\colon \SO(n)\to\SO(V)$ induces the Lie algebra homomorphism $\fso(n)\to\fsl(V)$. We will use the same notation $\fp$, $\fh$ for the images of $\fp$, $\fh$ under the Lie algebra homomorphism.
 
 It follows, see for instance \cite{edel98}, that the tangent to $\St_{nk}$ at a point $P\in \St_{nk}$ is given by
   \begin{equation} \label{St_tangentspace}
   T_P\St_{nk}=\{W\in \mathcal M_{nk}: \, W^TP+P^TW=0 \}.
   \end{equation}
   \begin{remark}\label{97}
   A simple calculation using~\eqref{St_tangentspace} shows that for  $Q\in G$, we have $T_{QP}\St_{nk}=QT_P\St_{nk}$. 
   \end{remark}
 In particular, 
\be \label{tgspace}T_E\St_{nk}=\left\{ \bpm A\\B\epm , \, A\in \fso(k),B\in \mathcal M_{(n-k)k} \right\} \subset \mathcal M_{nk}.\ee

Hence, its orthogonal complement in $\cM_{nk}$ is \be \label{98}T_E^\perp \St_{nk}=\left\{\bpm S\\0\epm, \, S\in \mathcal M_{kk}, \,S^T=S\right\}.\ee

 The orthogonal complement is further decomposed as
 \be \label{orth_decomp}
T_E^\perp \St_{nk}=V_E \oplus sl_{kk},
\ee
where $V_E$ is the linear span of $E$, and its orthogonal complement, denoted by $sl_{kk}$ is  defined as
$$
sl_{kk}=\left\{ \bpm S\\0\epm, \, S\in \mathcal M_{kk}, \,S^T=S, \, \tr(S)=0 \right\} .
$$
  
\subsection{Extrinsic rolling of $\St_{nk}$} 

  We will now turn our attention to  the rollings of curves in $M=\St_{nk} $ on $\M=T^{\text{aff}}_E\St_{nk}:=E+T_E\St_{nk}$ which is the affine tangent space at $E$. According to  Definition~\ref{def_extrinsic},   curves $\alpha(t)$ in $\St_{nk}$ are rolled on curves $\widehat\alpha(t) $ in $\widehat M$  by   rolling maps   $g(t)=(R(t),s(t))$ in $\SE(V)=\SO(V)\ltimes V$  under the action  $g(t)(\alpha (t))=R(t)(\alpha(t))+s(t)=\widehat\alpha(t)$. The fact that $\SO(n)$ acts transitively on $\St_{nk}$ implies that  there is a unique horizontal  curve $ Q(t) \in 
  \SO(n)$, $Q(0)=I$, that projects on $\alpha(t)$, that is, $\rho_{Q(t)}(E):=Q(t)E=\alpha (t)$, and $Q^{-1}(t)\dot{Q}(t)\in \fp$.
  
We will now assume that  $R(t)^{-1}=\rho_{Q(t)}\circ S(t)$  for some curve $S(t)$ in the isotropy group $K=\{S\in \SO(V):\ S(E)=E\}$. The choice of $Q(t)$ as a horizontal lift of $\alpha(t)$ allows particularly satisfy the first rolling condition in Definition~\ref{def_extrinsic}. Namelly: $R(t)^{-1}(E)=\rho_{Q(t)}\circ S(t)(E)=\rho_{Q(t)}(E)=\alpha (t)$, and therefore $R(t)(\alpha (t))=E.$
But then
 $ g(t)(\alpha (t))=R(t)(\alpha (t))+s(t)=E+s(t)=\widehat\alpha (t)$ by the reguirement of the first rolling condition in Definition~\ref{def_extrinsic}. It implies  that 
 \be s(t)\in T_E\St_{nk}, \,\, \text{ and so }\,\, \dot s(t)=\dot{\widehat\alpha} (t).\ee
 
In what follow we will find the condition on $S(t)$ such that $g(t)=\big(R(t),s(t)\big)$ satisfies the no-slip and both no-twist constrains. According  to the second rolling condition  in Definition~\ref{def_extrinsic}, \be d_{\alpha (t)}g(t)(T_{\alpha(t)}\St_{nk})=R(t)(T_{\alpha(t)}\St_{nk})=T_E\St_{nk}.\ee 
Remark~\ref{97} and $\rho_{Q(t)}(E)=\alpha (t)$ lead to 
\bean d_E\rho_{Q(t)}(T_E\St_{n,k})=\rho_{Q(t)}(T_E\St_{n,k})=T_{\alpha (t)}\St_{n,k}.\ean Therefore,
  \begin{eqnarray*}&
 T_E\St_{nk} =R(t)(T_{\alpha(t)}\St_{nk})=S^{-1}\circ \rho_{Q^{-1}}(T_{\alpha(t)}\St_{nk})=S^{-1}(T_E\St_{n,k}).\end{eqnarray*}
Hence, $S(T_E\St_{nk})=T_E\St_{nk}$, and since $S$ is an isometry in $V$, we also have $S(T_E^\perp \St_{nk})=T_E^\perp \St_{nk}$. So, 
\be \label{99}
S(T_E\St_{nk})=T_E\St_{nk}, \quad S(T_E^\perp \St_{nk})=T_E^\perp \St_{nk}.
\ee
 Moreover, since $S(E)=E$ and $S$ is an  orthogonal transformation,
 \be \label{100}
 S(V_E)=V_E, \qquad S(sl_{kk})=sl_{kk}.
 \ee

 From now on,  we use the notation $\dot R(t)$ for the time derivative of $R(t)\in \SO(V)$, and similarly for the time derivative of any other curves in  $\SO(V)$. We also recall from the beginning of this section that $\dot{\rho_{Q(t)}}:= \rho_{\dot{Q}(t)}$ for $Q(t)\in \SO(n)$.
 
The no-slip condition requires that  $\dot{R}(t)(\alpha (t))+\dot s(t)=0$, or,
\be \dot{R}(t)\circ R(t)^{-1}(E)=-\dot s(t).  \ee
Since $R=S^{-1}\circ \rho_{Q^{-1}}$, we have
\be
\ba{lcl}\dot{R}\circ R^{-1}&=&(\dot{S^{-1}} \circ \rho_{Q^{-1}}+{S^{-1}}\circ \rho_{\dot{Q^{-1}}})\circ \rho_{Q}\circ S\\& =&\dot{S^{-1}}\circ S+S^{-1}\circ \rho_{\dot{Q^{-1}}}\circ \rho_{Q}\circ S \ea
\ee
  Since   $\dot{S^{-1}}\circ S= -S^{-1}\circ \dot{S}$ and   $\rho_{\dot{Q^{-1}}} \circ \rho_Q=- \rho_{Q^{-1}} \circ \rho_{\dot{Q}}=- \rho_{Q^{-1}\dot{Q}}$,  the above can be rewritten as
 \be\label{Rolcurve}
\dot{R}\circ R^{-1}=-S^{-1}\circ \dot{S}- S^{-1}\circ (\rho_{Q^{-1}\dot{Q}})\circ S.
\ee
 Note that $S(E)=E$ implies $\dot{S}(E)=0$,  and  $\rho_{Q^{-1}\dot{Q}}(E) =Q^{-1}\dot Q E =UE$, for $U\in \fp$. Taking into consideration  the structure of elements in $\fp$, appearing in (\ref{structure}), $Q^{-1}(t)\dot Q(t)=\bpm A(t)&-B^T(t)\\B(t)&0\epm$, and consequently the no-slip condition requires that 
\be\label{dot-s}
\dot s(t)=-\dot R(t)R^{-1}(t)(E)=S^{-1}\Big(\bpm A(t)\\B(t)\epm\Big). \ee
   
   We will now choose  $S(t)\in K$, or equivalently $\Omega(t)=\dot S \circ S^{-1}\in \fso(V)$ so that $R(t)$ satisfies the no-twist conditions. 
  Since $\Omega(t)(E)=0$.  Therefore 
\be \label{102}
\begin{array}{lll}
&\dot S \circ S^{-1}(T_E \St_{nk})=\Omega(t)(T_E\St_{nk})\subset T_E\St_{nk},
\\
&\dot S\circ S^{-1}(T_  E^\perp \St_{nk})=\Omega(t)(T_ E^\perp \St_{nk})\subset T_ E^\perp \St_{nk}.
\end{array}
\ee
\be \label{101}\Omega \, (V_E)=0, \quad \Omega \, (sl_{kk})
 \subset sl_{kk}. \ee
due to~\eqref{99} and~\eqref{100}.

 Now, since $T_{\widehat\alpha(t)}\widehat M = T_E\St_{nk}$, and similarly $(T_{\widehat\alpha (t)}\widehat M)^{\perp} = T_E^{\perp}\St_{nk}$,  the tangential  no-twist condition 4' given in Proposition \ref{prop-sharpe}, requires that
 \be \label{tnt}
 \dot g(t)\circ g^{-1}(t)(T_E\St_{nk})\subset T_E^\perp \St_{nk}.
 \ee
 Since $\dot g\circ g^{-1}(T_E\St_{nk})=\dot  R\circ R^{-1}(T_E\St_{nk})$, taking into account~\eqref{Rolcurve}, we can write
 $$
 \dot g \circ g^{-1}(T_E\St_{nk})=-S^{-1}\circ (\dot S\circ S^{-1}+ \rho_{Q^{-1}\dot Q})\circ S\, (T_E\St_{nk}),
$$
and using~\eqref{99}, the tangential no-twist condition (\ref{tnt}) can be written as 
$$\dot S\circ S^{-1}\, (T_E\St_{nk}) + \rho_{Q^{-1}\dot Q} \, (T_E\St_{nk})\subset 
T_E^\perp \St_{nk},$$
or, equivalently,
\be\label{tang-sol}
\Omega\, (T_E\St_{nk})=  -\Pi \left( \rho_{Q^{-1}\dot Q }\, (T_E\St_{nk})\right),
\ee
where $\Pi$  denotes the orthogonal projections  of $V$ onto $T_E\St_{nk}$.

We now impose the normal no-twist condition 
\be \label{nnt}
 \dot g(t)\circ g^{-1}(t)(T_E^\perp \St_{nk})\subset T_E \St_{nk},
 \ee 
 and similarly to the previous calculations, we obtain a second restriction on $S$:
 \be\label{tang-sol2}
\Omega\, (T_E^\perp  \St_{nk})=  -\Pi^\perp  \left( \rho_{Q^{-1}\dot Q }\, (T_E^\perp \St_{nk})\right),
\ee
where  $\Pi^\perp $  denotes the orthogonal projections  of $V$ onto $T_E^\perp \St_{nk}$.

To make sure that the previous condition can be fulfilled, we must show that the righthand side of~\eqref{tang-sol2} is according to the action \eqref{101} of $\Omega$ on each subspace of the direct decomposition  of $T_E^\perp  \St_{nk}$ in~\eqref{orth_decomp}. For that, we compute the product of  the matrix $Q^{-1}\dot Q$ by elements in $T_E^\perp \St_{nk}$, using the fact that $Q^{-1}\dot Q\in \fp$ and the structure of the matrices in these subspaces, given in~\eqref{structure} and~\eqref{98}.

Assume that 
$$
Q^{-1}\dot Q=\bpm A&-B^T\\ B&0 \epm , \, A= -A^T, \, \mbox{ and take }\,  \bpm X \\0\epm , \, \mbox{ with }\, X=X^T.
$$
Then, 
\begin{eqnarray*}&
  Q^{-1}\dot Q\bpm X \\0\epm=\frac{1}{2}\bpm AX+XA\\2BX\epm+\frac{1}{2}\bpm AX-XA\\0\epm.
  \end{eqnarray*}
 Notice that $AX-XA$ is symmetric with trace zero, and when $X=I_k$, $AX-XA=0$.  Therefore, as required,
 $$\Pi^\perp  \left( \rho_{Q^{-1}\dot Q }\, (V_E)\right)=0, \qquad \Pi^\perp  \left( \rho_{Q^{-1}\dot Q }\, (sl_{kk})\right)\subset sl_{kk}.$$
 
 {\bf We now summarize how to find the rolling map} $(R(t),s(t))\in \SE(V)$, for  rolling $\St_{nk}$ on $T^{\text{aff}}_E\St_{nk}$, along a curve $\alpha(t)$, $\alpha(0)=E$.
 \begin{itemize}
 \item[1.] Find the horizontal lift $Q(t)$ of $\alpha(t)$, satisfying $Q(0)=e_G$. \linebreak
 We know that $
Q^{-1}\dot Q=\bpm A&-B^T\\ B&0 \epm , \, A= -A^T$.

\item[2.] Find $S(t)$ using the no-twist conditions \eqref{tang-sol}, \eqref{tang-sol2}, with  $S(0)=e_{\SO(V)}$. Those conditions can be rewritten as:
$$
\left\{
\ba{lcl}
\dot S \circ S^{-1}(v^\top)=-\Pi ( Q^{-1}\dot Q \, v^\top), \quad \forall v^\top \in T_E \St_{nk}\\
\dot S \circ S^{-1}(v^\perp)=-\Pi^\perp ( Q^{-1}\dot Q \, v^\perp), \quad \forall v^\perp \in T_E^\perp \St_{nk}
\ea
\right. .
$$
\item[3.] Find $R=S^{-1}\circ \rho_{_{Q^{-1}}}$.
\item[4.] Find $s(t)$ by solving equation \eqref{dot-s}, resulting from the no-slip condition, with $s(0)=0$:
$$\dot s(t)=S^{-1}\Big(\bpm A(t)\\B(t)\epm\Big).$$

 \end{itemize}
 
 \section*{Acknowledgments}  The work of second and third authors was partially supported by the project Pure Mathematics in Norway, funded by Trond Mohn Foundation and Troms\o\ Research Foundation.
 The third author also thanks Funda\c{c}\~ao para a Ci\^encia e Tecnologia (FCT) and COMPETE 2020 program for the financial support to the project UIDB/00048/2020.

\end{document}